\numberwithin{equation}{section}
\newtheorem{theorem}{Theorem}[section]
\newtheorem{lemma}[theorem]{Lemma}
\newtheorem{proposition}[theorem]{Proposition}
\newtheorem{corollary}[theorem]{Corollary}
\theoremstyle{definition}
\newtheorem{definition}[theorem]{Definition}
\newtheorem{remark}[theorem]{Remark}
\newcommand{\R}{\mathbb R}
\newcommand{\C}{{\rm cap}}
\newcommand{\Dzc}{\mathcal{D}^{1,2}_z(\R^{n+1}) }
\newcommand{\Rnn}{\mathbb R^{n+1}}
\newcommand{\sub}{\subseteq}								% una piccola abbreviazione
\newcommand{\abs}[1]{\left| #1 \right|}						% valore assoluto
\newcommand{\norm}[1]{\left\lVert #1 \right\lVert}			% norma
\newcommand{\Ds}{\mathcal{D}^{s,2}(\R^n)}					% spazio omogeneo su R^n
\newcommand{\DsO}{\mathcal{D}^{s,2}(\R^n\setminus\Omega)}	% spazio omogeneo su R^n - Omega
\newcommand{\Dext}{\mathcal{D}^{1,2}_z(\overline{\R^{n+1}_+})}	% spazio omgeneo su R^{n+1}_+
\newcommand{\DextO}{\mathcal{D}^{1,2}_z(\overline{\R^{n+1}_+}\setminus\Omega)}	%s pazio omgeneo su R^{n+1}_+ - Omega
\newcommand{\dive}{\mathrm{div}}			% divergenza
\newenvironment{bvp}{\left\{\begin{aligned}  }{\end{aligned}\right.}	% parentesi e allineamenti per pbm al contorno
\DeclareMathOperator{\Tr}{\mathrm{Tr}}		% Traccia
\def\A{\mathcal A}
\def\C{{\rm cap}}
\def\Cs{{\rm cap}_s}
\def\eps{\varepsilon}
\def\ges{\gtrsim}
\author{Eleonora Cinti}
\address{Dipartimento di Matematica, Universit\`a di Bologna, 40126 Bologna, Italy}
\email{eleonora.cinti5@unibo.it}
\author{Roberto Ognibene}
\address{Dipartimento di Ingegneria meccanica \\ energetica, gestionale e dei trasporti \\
	Università di Genova \\  16145  Genova \\ Italy} 
\email{roberto.ognibene@edu.unige.it} 
\author{Berardo Ruffini}
\address{Dipartimento di Matematica, Universit\`a di Bologna, 40126 Bologna, Italy} 
\email{berardo.ruffini@unibo.it}
\title{A quantitative stability inequality for fractional capacities}
\keywords{Capacity, Fractional Laplacian, Quantitative inequalities, Caffarelli-Silvestre extension}
\subjclass[2010]{49Q10; 39B62; 35R11; 32U20}
\begin{document}

\begin{abstract}
The aim of this work is to show a non-sharp quantitative stability version of the fractional isocapacitary inequality. In particular, we provide a lower bound for the isocapacitary deficit in terms of the Fraenkel asymmetry. In addition, we provide the asymptotic behaviour of the $s$-fractional capacity when $s$ goes to $1$ and the stability of our estimate with respect to the parameter $s$.
\end{abstract}

\maketitle
\tableofcontents
\section{Introduction}

The classical isocapacitary inequality states that among sets which share the same amount of Lebesgue measure, balls minimize the electrostatic (Newtonian) capacity, that is, for any measurable set $\Omega$ with finite measure, the following scale invariant inequality holds true
\begin{equation}\label{eq:iso_0}
	|\Omega|^{(2-n)/n}\C(\Omega)\geq  |B|^{(2-n)/n}\C(B).
\end{equation}
Here $|\cdot|$ stands for the $n-$dimensional Lebesgue measure, $n\ge3$, $B$ is any ball in $\R^n$ and $\C(\cdot)$ is the standard electrostatic capacity in $\R^n$, defined for compact sets as
\begin{equation}\label{eq:def_std_cap}
	\C(\Omega):=\inf\left\{\int_{\R^n} \abs{\nabla u}^2\,dx\colon ~u\in C_c^\infty(\R^n),~u\geq 1 ~\text{in }\Omega \right\}.
\end{equation}
We observe that \eqref{eq:iso_0} can be rephrased in terms of the \emph{isocapacitary deficit}, by saying that
\begin{equation}\label{eq:isocapacitary}
	d_{\rm cap}(\Omega):=\frac{|\Omega|^{(2-n)/n}\C(\Omega)}{|B|^{(2-n)/n}\C(B)}-1 \geq 0.
\end{equation}
It is well known that the isocapacitary inequality is rigid, in the sense that $d_{\rm cap}(\Omega)$ vanishes if and only if $\Omega$ is equivalent to a ball up to a set of null {Lebesgue measure}.  Thus, it appears as a natural quest the attempt of  obtaining a quantitative stability version of \eqref{eq:isocapacitary}.  There are several possible geometric quantities that can properly  measure  the difference between a generic set and a ball with the same volume.  The most natural one is the so-called  \emph{Fraenkel asymmetry}, first proposed by L. E. Fraenkel  given by
\[
\A(\Omega)=\inf\left\{\frac{|\Omega\Delta B|}{|\Omega|}\,:\, \text{ $B$ is a ball with $|B|=|\Omega|$}  \right\}.
\]
The first attempts in this direction were made in the '90s. In particular, in \cite{HHW} stability inequalities of the form
\begin{equation*}\label{eq:non_sharp}
	d_{\rm cap}(\Omega)\geq C_n \A(\Omega)^{n+1}
\end{equation*}
were proved, restricting to the class of convex sets when $n\geq 3$\footnote{In the planar case the suitable capacity is the logarithmic capacity.}. Nevertheless, in \cite{HHW} the optimal exponent was conjectured to be 2, that is
\begin{equation}\label{eq:sharp}
	d_{\rm cap}(\Omega)\geq C_n' \A(\Omega)^2,
\end{equation}
which is asymptotically sharp for small asymmetries. Inequality \eqref{eq:sharp} was proved in the planar case in \cite[Corollary 2]{HN1992} (see also \cite{HN} and \cite{HN1994} for related results with other notions of deficiencies). As far as higher dimensions are concerned, \eqref{eq:sharp} was proved by Fraenkel in \cite{Fraenkel2008} for starshaped sets, while in \cite{FMP} the authors provided the inequality \eqref{eq:sharp} with a suboptimal exponent but for general sets, i.e.
\begin{equation}\label{eq:quantitativefmp}
	d_{\rm cap}(\Omega)\ge C_n''\A(\Omega)^4.
\end{equation}
The conjecture in its full generality was finally established in \cite{DMM}. It is worth stressing that to get this result the authors need to exploit the suboptimal inequality \eqref{eq:quantitativefmp}. We finally mention \cite{Mukoseeva2020}, where the author treated the case of the $p$-capacity and proved the corresponding sharp inequality. We point out that the approach followed in \cite{DMM,Mukoseeva2020}, while leading to the sharp exponent $2$, does not allow to work out the explicit constant which multiplies the asymmetry. On the contrary, inequality \eqref{eq:quantitativefmp} is not sharp for small values of $\mathcal{A}(\Omega)$ but comes with an explicit constant $C_n''>0$. 

In this work we tackle the problem of quantification of the isocapacitary inequality in the fractional framework. 

Let $s\in(0,1)$ and let $n>2s$. We consider the fractional generalization of the capacity, defined for compact sets as follows
\begin{equation}\label{eq:s_cap_comp}
\Cs(\Omega)=\inf \left\{ [u]^2_{s}\colon u\in C^\infty_c(\R^n),~u\geq 1~\text{on }\Omega\right\},
\end{equation}
where
\begin{equation}\label{eq:seminorm}
	[u]_s:=\left( \int_{\R^{2n}}\frac{\abs{u(x)-u(y)}^2}{\abs{x-y}^{n+2s}}\,dx\,dy \right)^{\frac{1}{2}}
\end{equation}
denotes the fractional Gagliardo seminorm of order $s$. The definition of fractional capacity of a general closed set $\Omega\sub\R^n$ is given in Definition \ref{def:capacity_s}, which can be easily proved to be equivalent to \eqref{eq:s_cap_comp} when $\Omega$ is compact.
As a straightforward consequence of the {fractional analogue of the P\'olya-Szeg\"o inequality (proved in \cite[Theorem 9.2]{Almgren1989}, see also Proposition \ref{polyaszego} below for an ``extended'' version)} one can easily derive the fractional isocapacitary inequality, stating that
\begin{equation}\label{eq:frac_iso}
	|\Omega|^{(2s-n)/n}\Cs(\Omega)\geq  |B|^{(2s-n)/n}\Cs(B),
\end{equation}
for any closed $\Omega\sub\R^n$ with finite measure and for any closed ball $B$. The aim of this work is to quantify the fractional isocapacitary deficit
\begin{equation}\label{eq:frac_deficit}
	d_{{\rm cap}_s}(\Omega):=\frac{|\Omega|^{(2s-n)/n}\Cs(\Omega)}{|B|^{(2s-n)/n}\Cs(B)}-1
\end{equation}
in terms of the asymmetry of $\Omega$. We point out that, in view of the scaling properties of the fractional capacity, the term
\[
	|B|^{(2s-n)/n}\Cs(B)
\]
is a universal constant, not depending on the choice of the ball $B$. It is also worth remarking that $\Cs(\cdot)$ can be defined, through \eqref{eq:s_cap_comp}, on open sets $\mathcal{O}$ and its value coincide with $\Cs(\overline{\mathcal{O}})$.

{The fractional P\"olya-Szeg\"o inequality entails the rigidity of inequality \eqref{eq:frac_iso}, in the sense that the equality holds if and only if $\Omega$ is a ball in $\R^n$, see \cite[Theorem A.1]{FS2008}.} We now present our main result, which amounts to a quantitative stability inequality for the fractional capacity.
\begin{theorem}\label{main}
Let $s\in(0,1)$ and $n> 2s$. There exists a constant $C_{n,s}>0$, depending only on $n$ and $s$, such that for any closed set $\Omega\subset \R^n$ with finite measure, there holds 
\begin{equation}\label{quantitativa}
d_{{\rm cap}_s}(\Omega)=\frac{|\Omega|^{(2s-n)/n}\Cs(\Omega)}{|B|^{(2s-n)/n}\Cs(B)}-1\ge C_{n,s}\,\A^{\frac{3}{s}}(\Omega).
\end{equation}
Moreover the constant $C_{n,s}$ can be explicitly computed, see Remark \ref{rmk:asympt_1}.
\end{theorem}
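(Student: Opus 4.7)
My strategy is to adapt to the fractional setting the approach of Fusco-Maggi-Pratelli \cite{FMP}, with the Caffarelli-Silvestre extension as the essential technical device. Given a closed $\Omega$, let $u \in C^\infty_c(\R^n)$ be a near-optimal competitor in \eqref{eq:s_cap_comp} and let $U$ be its $s$-harmonic extension to $\R^{n+1}_+$, so that up to a universal constant $c_{n,s}$ one has
$$
[u]_s^2 \;=\; c_{n,s}\int_{\R^{n+1}_+} z^{1-2s}|\nabla U|^2\,dx\,dz.
$$
Thus $\Cs(\Omega)$ is equivalent to a weighted Dirichlet minimization in the half-space. On this object the extended P\'olya-Szeg\"o inequality recalled in Proposition \ref{polyaszego}, applied with the Steiner symmetrization of $U$ in the horizontal variable $x$ at each fixed height $z$, directly yields \eqref{eq:frac_iso}, and its symmetrization deficit is, up to normalization, exactly $d_{\Cs}(\Omega)$.

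The core of the proof is to turn this deficit into quantitative information on $\mathcal{A}(\Omega)$. I would prove a quantitative version of Proposition \ref{polyaszego} in which one recovers not only symmetry in the limit but an explicit remainder term: using a weighted coarea formula in $\R^{n+1}_+$ and a Fuglede-type quantitative (weighted) isoperimetric inequality applied slice by slice at each height $z$, the deficit controls an $L^2$-integrated asymmetry of the super-level sets:
$$
d_{\Cs}(\Omega)\;\gtrsim\; \int_0^1 \mathcal{A}\bigl(\{u>t\}\bigr)^2\,\mu_{n,s}(t)\,dt,
$$
for an explicit weight $\mu_{n,s}$ coming from the structure of the weight $z^{1-2s}$ and the layer-cake decomposition of the extension.

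It then remains to transfer this integrated bound from the level sets to $\Omega = \{u\geq 1\}$ itself. The nesting of the $\{u>t\}$ forces the corresponding Fraenkel-optimal balls to share (asymptotically as $t\to 1^-$) a common centre whenever all the asymmetries are small, yielding an elementary estimate of the form
$$
\mathcal{A}(\Omega)\;\lesssim\; \mathcal{A}\bigl(\{u>t\}\bigr) + \frac{|\{u>t\}\setminus\Omega|}{|\Omega|}.
$$
The second term is controlled by the fine behavior of the $s$-capacitary potential near $\partial\Omega$, which decays at rate $(1-t)^{\beta(s)}$ with $\beta(s)$ linear in $s$. Optimizing the threshold $t$ against the integrated inequality produces the asymmetry exponent $3/s$, and by tracking all constants one obtains the quantity $C_{n,s}$ of Remark \ref{rmk:asympt_1}.

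The main obstacle I foresee is precisely this last step: propagating asymmetry information through the nested family of level sets in a fully explicit, $s$-robust way, particularly as $s \to 0^+$ where the weight $z^{1-2s}$ becomes singular and the long-range character of the fractional Laplacian makes $\beta(s)$ small, forcing the $1/s$ blow-up in the exponent. Doing this without invoking compactness or a selection principle—both of which would destroy the explicitness of $C_{n,s}$—is what dictates the quantitative but non-sharp exponent $3/s$ of \eqref{quantitativa}.
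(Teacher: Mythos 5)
Your high-level strategy is the same as the paper's—Caffarelli--Silvestre extension, horizontal Schwartz symmetrization slice by slice, coarea formula, and the quantitative isoperimetric inequality on each slice—but there are two genuine gaps in the way you propose to close the argument, and the mechanism you assign to the exponent $3/s$ is not the correct one.

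First, your proposed intermediate bound
$d_{\Cs}(\Omega)\gtrsim\int_0^1 \mathcal{A}(\{u>t\})^2\mu_{n,s}(t)\,dt$
cannot be obtained directly: the coarea formula on the extension gives level sets $\Omega_{t,z}=\{U_\Omega(\cdot,z)\geq t\}$ for each $z>0$, \emph{not} level sets of the trace $u_\Omega$. The essential technical step is to show that for $z$ small (depending on $\A(\Omega)$) and $t$ in a suitable window, $\A(\Omega_{t,z})\gtrsim\A(\Omega)$. In the paper this is Proposition~\ref{prop:asimmetrie}, which rests on Lemma~\ref{L2}: the quantitative $L^2$-closeness $\|U_\Omega(\cdot,z)-u_\Omega\|_{L^2}\lesssim z^s[u_\Omega]_s$. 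That $z^s$ scaling is precisely what produces the $1/s$ in the exponent, because the admissible range of $z$ is then $z\leq z_0\sim\bigl(\widehat{T}\sqrt{\A(\Omega)}\bigr)^{1/s}$; there is no boundary-decay exponent $\beta(s)$ of the potential in the argument, and attributing the $1/s$ to such a decay is a misdiagnosis.

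Second, and more structurally, your proposal has no analogue of the dichotomy built around the threshold $T$ defined in \eqref{T}. That dichotomy is what saves the argument when the measure of $\{u_\Omega\geq t\}$ collapses to $|\Omega|$ already for $t$ close to $1$ (the case $1-T<\kappa\A(\Omega)$): in that regime the weight $\mu_{n,s}(t)$ you would obtain from the symmetrization deficit is supported on an interval of length $\lesssim\A(\Omega)$ and the integrated-asymmetry bound degenerates, so one cannot conclude from it. The paper instead handles that regime by an entirely separate elementary comparison: truncate the potential at level $T$ to produce a competitor for $\Cs(\Omega_T)$, use the isocapacitary inequality \eqref{eq:frac_iso}, and exploit $\mu(T)\geq|\Omega|(1+\gamma\A(\Omega))$. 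Without this second branch your proof does not close. The transfer you propose via the Fraenkel-optimal balls sharing a common centre is also not needed nor used—the passage from level-set asymmetries back to $\A(\Omega)$ is purely measure-theoretic (Lemma~\ref{trasfasimmetria}) and does not require any information about the optimal centres.
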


{Our second result investigates the asymptotic behaviour of the function $s\mapsto\Cs(\Omega)$ when $s\to 1^-$, for a compact set $\Omega\sub\R^n$. In particular, we obtain that a suitable normalization of $\Cs$ behaves like the standard capacity as $s\to 1^-$ (see \eqref{eq:def_cap} for the precise definition of the classical notion of capacity).
}
\begin{proposition}\label{main2}
Let $n\ge 3$, then for every $\Omega\subset\mathbb{R}^n$ compact set, we have 
	\begin{equation}\label{eq:asympt_s_th1}
		\limsup_{s\nearrow 1} (1-s)\,{\rm cap}_s(\Omega)\le \frac{\omega_n}{2}\, {\rm cap}(\Omega),
	\end{equation}
	where $\omega_n:=|B_1|$ and $B_1$ denotes the unitary ball in $\R^n$. If in addition $\Omega$ is the closure of an open bounded set with Lipschitz boundary then
	\begin{equation}\label{eq:asympt_s_th2}
		\lim_{s\nearrow 1} (1-s)\,{\rm cap}_s(\Omega)=\frac{\omega_n}{2}\,{\rm cap}(\Omega).
	\end{equation}
\end{proposition}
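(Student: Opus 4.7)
My plan is to combine the Bourgain--Brezis--Mironescu (BBM) asymptotic for the Gagliardo seminorms with a Ponce-style compactness argument. For every $u \in C_c^\infty(\R^n)$, BBM provides
\begin{equation}\label{eq:bbm_plan}
\lim_{s \nearrow 1}(1-s)[u]_s^2 = \frac{\omega_n}{2}\int_{\R^n}|\nabla u|^2\,dx,
\end{equation}
where the constant $\omega_n/2$ arises in polar coordinates from the limit $(1-s)\int_0^R r^{1-2s}\,dr \to 1/2$ together with the identity $\int_{\S^{n-1}}(\omega\cdot e)^2\,d\H^{n-1}(\omega) = \omega_n$.

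The upper bound \eqref{eq:asympt_s_th1} then follows by direct comparison: for any $u \in C_c^\infty(\R^n)$ with $u \ge 1$ on the compact set $\Omega$, $u$ is admissible in \eqref{eq:s_cap_comp}, so $(1-s)\Cs(\Omega) \le (1-s)[u]_s^2$; taking $\limsup$ via \eqref{eq:bbm_plan} and then passing to the infimum over such $u$ in \eqref{eq:def_std_cap} concludes.

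For the reverse inequality in \eqref{eq:asympt_s_th2}, I select $s_k \nearrow 1$ realizing the $\liminf$ and near-minimizers $u_k$ of ${\rm cap}_{s_k}(\Omega)$; by truncation I may assume $0 \le u_k \le 1$, compactly supported in a common ball. If the $\liminf$ is finite, $(1-s_k)[u_k]_{s_k}^2$ is uniformly bounded, and the compactness/$\Gamma$-convergence theorem of Ponce yields, up to a subsequence, $u_k \to u$ strongly in $L^2_{\rm loc}(\R^n)$ with
\begin{equation}
\frac{\omega_n}{2}\int_{\R^n}|\nabla u|^2\,dx \le \liminf_{k\to\infty}(1-s_k)[u_k]_{s_k}^2.
\end{equation}
The Sobolev inequality, applied with the uniform $L^{2^*}$-bound inherited from the capacitary control on $u_k$, places $u$ in $\mathcal{D}^{1,2}(\R^n)$.

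Finally I need $u$ admissible for $\C(\Omega)$: strong $L^2(\Omega)$-convergence combined with $u_k \ge 1$ on $\Omega$ gives $u \ge 1$ a.e.\ on $\Omega$, and at this point the Lipschitz regularity of $\partial\Omega$ enters, allowing one to upgrade this a.e.\ condition into an admissible competitor in \eqref{eq:def_std_cap} by a standard mollification on a slight thickening of $\Omega$ (equivalently, by the density of smooth functions in $H^1$ with prescribed trace on a Lipschitz boundary). This produces the bound $\C(\Omega) \le \int|\nabla u|^2\,dx$, which combined with the lower-semicontinuity estimate gives the matching $\liminf$ and hence \eqref{eq:asympt_s_th2}. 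I expect this final step to be the main obstacle: for a generic compact $\Omega$, $L^2_{\rm loc}$ convergence is too weak to preserve the obstacle constraint in a capacity-sensitive way, which is exactly why \eqref{eq:asympt_s_th2} requires Lipschitz regularity whereas \eqref{eq:asympt_s_th1} holds for all compact sets.
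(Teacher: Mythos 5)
The upper bound \eqref{eq:asympt_s_th1} is handled exactly as in the paper: BBM for smooth competitors, then infimum. That part is fine.

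For the equality \eqref{eq:asympt_s_th2} your strategy (BBM/Ponce $\Gamma$-liminf applied to near-minimizers, then show the $L^2$-limit is admissible for $\C(\Omega)$) is the same as the paper's, but there is a genuine gap in your step 1. You write ``by truncation I may assume $0 \le u_k \le 1$, compactly supported in a common ball.'' Truncation of values gives $0\le u_k\le 1$ but does \emph{not} shrink the support; imposing a common compact support requires multiplying by a spatial cutoff, which may increase the Gagliardo seminorm by an amount that you have no control on, uniformly in $s$. This is precisely the point the paper's proof spends most of its effort on: it compares the capacitary potential $u_{s,\Omega}$ with the potential $u_{s,B_{R_0}}$ of a ball containing $\Omega$ (via Kelvin transform and the maximum principle), invokes the uniform-in-$s$ decay bound $u_{s,\Omega}(x)\le 2R_0^{n-2s}|x|^{2s-n}$ from \cite[Proposition 3.6]{BMS}, and then replaces $u_{s,\Omega}$ by the truncation $u_{s,\Omega}^{\varepsilon}=(u_{s,\Omega}-\varepsilon)^+/(1-\varepsilon)$, whose support lies in a ball $B_{\bar R(\varepsilon)}$ \emph{independent of} $s$. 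Only with this in hand can one apply the compactness result of \cite[Proposition 3.6]{BPS} on a fixed bounded domain. Without the decay estimate your compactness step is not justified.

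Your route to admissibility of the limit also differs from the paper's. You pass from ``$u\ge 1$ a.e.\ on $\Omega$'' to admissibility by a mollification/trace argument using the Lipschitz boundary; the paper instead applies the BPS compactness \emph{also} to the family $u_{s,\Omega}^{\varepsilon}-\eta_\Omega$, using that $\widetilde{W}_0^{s,2}(B_{\bar R}\setminus\Omega)=\mathcal{D}^{s,2}(B_{\bar R}\setminus\Omega)$ for bounded Lipschitz domains (that is exactly where the Lipschitz hypothesis enters in the paper), and deduces directly that the limit belongs to $W_0^{1,2}(B_{\bar R}\setminus\Omega)$ up to subtracting $\eta_\Omega$. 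Both variants are trace-type arguments and your version (replacing $u$ by $\min\{u,1\}$, noting the difference with $\eta_\Omega$ vanishes a.e.\ on $\Omega$, and using that vanishing a.e.\ on a closed Lipschitz set implies membership in $W_0^{1,2}$ of the complement) should be made workable, but as stated ``standard mollification on a slight thickening'' is too vague; you should either make the trace argument explicit or cite the appropriate density result. The essential missing ingredient in your proposal remains the uniform decay estimate needed to localize the near-minimizers.
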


{We observe that the exponent $3/s$ appearing in \eqref{quantitativa} is likely not sharp,\footnote{The optimal exponent was conjectured to be 2 in the fractional case as well.}. Nevertheless, since the constant $C_{n,s}$ in \eqref{quantitativa} can be explicitly computed (see Remark \ref{rmk:asympt_1}), together with its limit as $s\to 1^-$ (see Remark \ref{rmk:asympt_2}), our result entails an improvement of \eqref{eq:quantitativefmp}, asymptotically as $s\to 1^-$. In particular, thanks also to Theorem \ref{main2} and Lemma \ref{lemma:compact}, we are able to state the following.}

%Finally, as a direct consequence of Theorem \ref{main2} and Remark \ref{rmk:asympt_1}, we obtain that $C_{n,s}$ (as in \eqref{quantitativa}) is uniformly bounded away from $0$ and $+\infty$, as $s\to 1^-$. We observe that the exponent $3/s$ appearing in \eqref{quantitativa} is not sharp,\footnote{The sharp exponent is conjectured to be $2$ in the fractional case as well.}. Nevertheless, since the constant $C_{n,s}$ can be explicitly computed, together with is limit as $s\to 1^-$, our result can be seen as an improvement of \eqref{eq:quantitativefmp}, asymptotically as $s\to 1^-$, as a consequence of the following result.

\begin{corollary}\label{cor}
	Let $n\geq 3$ and $\Omega\sub\R^n$ be a closed set with finite measure. Then \eqref{quantitativa} is stable as $s\to 1^-$ and there holds
	\[
	d_{\rm cap}(\Omega)\ge C_n\,\mathcal A(\Omega)^3,
	\]
	for some $C_n>0$ depending only on $n$, {whose explicit value can be found in Remark \ref{rmk:asympt_2}}.
\end{corollary}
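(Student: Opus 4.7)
The strategy is to pass to the limit $s\to 1^-$ in the quantitative inequality \eqref{quantitativa} of Theorem \ref{main}. The first algebraic observation is that, since the factor $(1-s)$ cancels in the ratio, the $s$-deficit can be rewritten as
$$d_{{\rm cap}_s}(\Omega)=\frac{|\Omega|^{(2s-n)/n}\,(1-s)\,\Cs(\Omega)}{|B|^{(2s-n)/n}\,(1-s)\,\Cs(B)}-1,$$
which is exactly the scaling suitable for invoking Proposition \ref{main2}. Since the ball $B$ has Lipschitz boundary, the full statement \eqref{eq:asympt_s_th2} yields $(1-s)\,\Cs(B)\to \frac{\omega_n}{2}\C(B)$, a strictly positive finite limit. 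Combined with the elementary convergence $|B|^{(2s-n)/n}\to |B|^{(2-n)/n}$, the denominator converges to a positive constant.

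For the numerator, assuming first that $\Omega$ is compact, only the one-sided estimate \eqref{eq:asympt_s_th1} is guaranteed, namely $\limsup_{s\to 1^-}(1-s)\,\Cs(\Omega)\le \frac{\omega_n}{2}\C(\Omega)$. This is precisely the right direction: dividing the limsup of the numerator by the limit of the denominator produces
$$\limsup_{s\to 1^-}d_{{\rm cap}_s}(\Omega)\le d_{\rm cap}(\Omega).$$
On the other hand, by Remark \ref{rmk:asympt_2} the explicit constant $C_{n,s}$ converges to a strictly positive limit $C_n$ (whose explicit form is recorded there), and trivially $\mathcal{A}(\Omega)^{3/s}\to \mathcal{A}(\Omega)^3$ since $\mathcal{A}(\Omega)\le 2$. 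Taking the liminf in \eqref{quantitativa} therefore yields
$$\liminf_{s\to 1^-}d_{{\rm cap}_s}(\Omega)\ge C_n\,\mathcal{A}(\Omega)^3.$$
Combining the two one-sided limits via the universal inequality $\liminf\le\limsup$ gives $d_{\rm cap}(\Omega)\ge C_n\,\mathcal{A}(\Omega)^3$ for every compact $\Omega$. The stability of \eqref{quantitativa} as $s\to 1^-$ is then nothing but the control $C_{n,s}\to C_n>0$ from Remark \ref{rmk:asympt_2}.

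The final step is to drop the compactness assumption and treat an arbitrary closed set of finite measure: this is where Lemma \ref{lemma:compact} enters, providing an approximation by compact sets along which the classical isocapacitary deficit and the Fraenkel asymmetry behave compatibly, so that the inequality just established for compact sets extends to the general case by passage to the limit in the approximating sequence. The main technical subtlety is the one-sidedness of Proposition \ref{main2}, which for a general compact $\Omega$ gives only the limsup bound; the whole argument hinges on the observation that this is exactly the bound we need, since the liminf already takes care of the lower-bound side coming from Theorem \ref{main}.
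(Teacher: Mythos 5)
Your argument is correct and follows precisely the route the paper implicitly sketches (Theorem \ref{main} combined with Proposition \ref{main2}, Remark \ref{rmk:asympt_2} and Lemma \ref{lemma:compact}): the key observation that the $(1-s)$ factors cancel in the deficit ratio, together with the remark that the one-sided bound \eqref{eq:asympt_s_th1} points exactly in the direction one needs, is exactly how the corollary is meant to be derived. The only microscopic remark is that Lemma \ref{lemma:compact} as stated covers the continuity of $\Cs$ (and $\mathcal A$) along $\Omega\cap\overline{B_r}$, so for the final approximation step you also need the analogous continuity of the classical capacity along increasing compact exhaustions, which holds by the very same \cite{EG2015} argument invoked in its proof and is an implicit gloss shared with the paper.
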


\subsection{Strategy of the proof of Theorem \ref{main}}
Our proof is  inspired by that in \cite{BCV} where the authors deal with the (non-sharp) quantitative stability of the first eigenvalue of the fractional Laplacian with homogeneous Dirichlet exterior conditions. Such a result, in turn, relies on ideas established in \cite{FMP,HN}. Here, we provide a sketch of these arguments, starting with the classical case and then trying to emphasize the differences occurring in the fractional framework.

It is well known that, for any closed $\Omega\sub\R^n$ with finite measure, there exists a unique function $0\leq u_\Omega\leq 1$, belonging to a suitable functional space, that achieves $\C(\Omega)$. Such a function is called the \emph{capacitary potential} of $\Omega$. First, by means of the coarea formula one gets
\[
\C(\Omega)\sim \int_{\R^n}|\nabla u_\Omega|^2\,dx\sim \int_0^1 \left(\int_{\{u_\Omega=t\}}|\nabla u_\Omega|\,d\mathcal H^{n-1}\right)\,d t.
\]
The right-hand side of the latter equality, after some manipulation can be  written in terms of the perimeter of the superlevel sets $\{u_\Omega\geq t\}$, i.e.
\[
\C(\Omega)\sim \int_0^1 P(\{u_\Omega\ge t\})^2 f_1(t)\,dt.
\]
being $f_1$ a suitably chosen real  function depending only on the measure of $\{u_\Omega\ge t\}$. The  idea is then to exploit the sharp quantitative isoperimetric inequality \cite{FMP08,FiMP,CL} 
\[
d_{\rm Per}(E)\sim|E|^{(n-1)/n}P(E)-|B|^{(n-1)/n}P(B)\ges  \A(E)^2,
\] 
holding for any $E\sub\R^n$ in a suitable class and for any ball $B\sub\R^n$. Plugging this into the previous estimate, after some further manipulation, one gets 
\[
d_{\C}(\Omega)\ges \int_0^1 \A(\{u\ge t\})^2f_2(t)\,dt ,
\]
where  $f_2$ is an explicit {positive} integrable function depending only on the size of the superlevels of $u_\Omega$.  
Then we  reason  in  the spirit of \cite{HN}:

%\footnote{In fact, this latter part of the sketch of the proof presents some differences  in fact from the original proof in \cite{FMP}. Notice indeed that we get here, as in our main theorem, the -better- exponent $3$, instead of $4$.}
: heuristically, as long as $t$ is close to $\|u_\Omega\|_{L^\infty(\R^n)}=1$ we expect the set $\{ u_\Omega\ge t\}$ is close to $\Omega$ in $L^1$ and that $\A(\{ u_\Omega\ge t\})\sim\A(\Omega)$, and then, the idea is to seek for a threshold $T$
%$T\sim 1-\mathcal A(\Omega)\sim 1$ 
such that at once
\begin{itemize}
\item $\A(\{u\ge t\}) \sim \A(\Omega)$  as long as $t\in(T,1)$ and
\item the quantity $\int_T^1 f_2(t)\, dt$
results proportional to a power of $\A(\Omega)$.
\end{itemize}
 The previous two properties lead directly to the sought inequality. A bit more precisely, if the threshold $T$ is such that $1-T\gtrsim \mathcal A(\Omega)$, then the above strategy works, while if $1-T\lesssim \mathcal A(\Omega)$, then the fact that the asymmetry is large (with respect to $1-T$)
allows, by a simple comparison argument, to get an asymptotically stronger inequality of the form 
%one can choose the threshold $T$ to be proportional to $1-\A(\Omega)$ and divide the case  $T\sim 1$ and $T$ far from $1$, and show that in the first case the above strategy works, while in the second case the fact that the asymmetry is large allows, by a simple comparison argument, to get an asymptotically stronger inequality of the form 
 \[
 d_{{\rm cap}}(\Omega)\ges \A(\Omega).
 \]
%where  $f_2$ is an (explicit) integrable function depending only on the size of the superlevels of $u_\Omega$ and such that
%\[
%	\int_0^1f_2(t)\,d t\sim \mathcal{A}(\Omega).
%\]
%Then we can conclude  in  the spirit of \cite{HN}\footnote{This latter part of the sketch of the proof differs in fact from the original proof in \cite{FMP}. Notice indeed that we get here, as in our main theorem, the -better- exponent $3$, instead of $4$.}: heuristically, as long as $t$ is close to $\|u_\Omega\|_{L^\infty(\R^n)}=1$ we expect the set $\{ u_\Omega\ge t\}$ to be close to $\Omega$ in $L^1$ and that $\A(\{ u_\Omega\ge t\})\sim\A(\Omega)$. With such an ansatz in force we get 
%\[
%d_\C(\Omega)\ges \int_0^1 \A(\{u\ge t\})^2f_2(t)\,dt\ges \A(\Omega)^3.
%\]
In the fractional case the existence of a capacitary potential $u_\Omega$ is guaranteed as well, see Remark \ref{rmk:potential}. However, the arguments described above cannot be directly implemented in the fractional scenario, due to nonlocal effects. Indeed the very first step fails, since a suitable coarea formula for non-integer Sobolev spaces is missing. A way to overcome these difficulties is provided by the so called Caffarelli-Silvestre extension for functions in fractional Sobolev spaces. Loosely speaking, this tool allows us to interpret nonlocal energies of functions defined on $\R^n$ as local energies of functions depending on one more variable. Namely, one can prove a characterization of the $s$-capacity in the fashion of
\begin{equation}\label{eq:intr_min}
\Cs(\Omega)\sim\inf\left\{ \int_{\R^{n+1}_+}z^{1-2s} |\nabla U(x,z)|^2\,dx\,dz \colon U(x,0)=u_\Omega(x)  \right\},
\end{equation}
where
\[
	\R^{n+1}_+:=\{(x,z)\colon x\in\R^n,~z>0\}
\]
and $U$ varies in a suitable functional space on $\R^{n+1}_+$. Moreover, one can prove that the infimum in \eqref{eq:intr_min} is uniquely achieved by a function $0\leq U_\Omega\leq 1$. We refer to Section \ref{sec:extd} for the precise setting and definitions. At this point, the above strategy may be applied on every horizontal slice $\{(x,z)\,:\,x\in\R^n\}$ and with $U_\Omega(\cdot,z)$ in place of $u_\Omega$. This way, we end up with
\[
d_{\Cs}(\Omega)\ges  \int_0^\infty z^{1-2s} \int_0^1 \A (\left\{  U(\cdot,z)\geq t  \right\})^2 f_z(t)\,dt\,dz
\] 
where, again, $f_z$ is an explicit real-valued function depending on the measure of the superlevels of $U(\cdot,z)$. Here it appears evident the extra inconvenience due to the presence of the integral in the $z-$variable. To get rid of this latter problem, we adapt ideas in \cite{BCV} to show the existence of a \emph{good} interval $(0,z_0)$, for which the (asymmetries of the) superlevels of $U_\Omega(\cdot,z)$ are close to (those of) the superlevels of $u_\Omega$, leading to
\[
\begin{aligned}
d_{\Cs}(\Omega)&\ges  \int_0^{z_0} z^{1-2s} \int_0^1 \A (\left\{   U_\Omega(\cdot,z)\geq t  \right\})^2 f_z(t)\,dt\,dz\\
&\sim \int_0^1 \A (\left\{ u_\Omega\geq t  \right\})^2 f_z(t)\,dt
\end{aligned}
\] 
and hence conclude similarly as in the classical local case.

\section{Preliminaries}\label{sec:pre}

In this section we introduce some prerequisites that are necessary in order to prove our main result.

\subsection{The fractional capacity}
First of all, we precisely define the functional setting we work in. For any open set $\mathcal{O}\subseteq\R^n$, we consider the homogeneous fractional Sobolev space $\mathcal{D}^{s,2}(\mathcal{O})$, defined as the completion of $C_c^\infty(\mathcal{O})$ with respect to the Gagliardo seminorm of order $s$
\begin{equation*}
	[u]_s= \left( \int_{\R^{2n}}\frac{\abs{u(x)-u(y)}^2}{\abs{x-y}^{n+2s}}\,dx\,dy \right)^{\frac{1}{2}}.
\end{equation*}
The space $\mathcal{D}^{s,2}(\mathcal{O})$ is an Hilbert space, naturally endowed with the following scalar product
\begin{equation*}
(u,v)_{\Ds}:=\int_{\R^{2n}}\frac{(u(x)-u(y))(v(x)-v(y))}{\abs{x-y}^{n+2s}}\,dx\,dy.
\end{equation*}
Moreover, by trivial extension we have that $\mathcal{D}^{s,2}(\mathcal{O})$ is continuously embedded in $\Ds$. We refer to \cite{Brasco2019} and \cite{Brasco2021} for more details concerning fractional homogeneous spaces and their characterizations. We limit ourselves to recall  the fractional Sobolev inequality, which reads as follows:
\begin{equation}\label{eq:sobolev_ineq}
S_{n,s}\norm{u}_{L^{2^*_s}(\R^n)}^2\leq [u]_s^2\quad\text{for all }u\in\Ds,
\end{equation}
where
\[
	2^*_s:=\frac{2n}{n-2s}
\]
denotes the critical Sobolev exponent in the fractional framework and $S_{n,s}>0$ denotes the best constant in the inequality. 
%($\Gamma$ stands for the usual Euler's Gamma function). 
In particular, this result ensures the continuity of the embedding
\begin{equation*}
\Ds\hookrightarrow L^{2^*_s}(\R^n)
\end{equation*}
and it provides the following characterization
\begin{equation}\label{eq:charact_space}
\Ds=\{ u\in L^{2^*_s}(\R^n)\colon [u]_s<\infty \}. 
\end{equation}
We refer to \cite[Theorem 1.1]{Cotsiolis2004} (see also \cite[Theorem 7]{Savin2011} in the Appendix) and to \cite[Theorem 3.1]{Brasco2021} for the proofs of \eqref{eq:sobolev_ineq} and \eqref{eq:charact_space}, respectively.

We now introduce the definition of fractional capacity of a closed subset of $\R^n$.

\begin{definition}\label{def:capacity_s}
	Let $\Omega\sub\R^n$ be closed and let $\eta_\Omega\in C_c^\infty(\R^n)$ be such that $\eta_\Omega=1$ in an open neighbourhood of $\Omega$. We define the \emph{fractional capacity} of order $s$ (or \emph{$s$-capacity}) of the set $\Omega$ as follows:
	\begin{equation*}
		\Cs(\Omega):=\inf \{ [u]_s^2\colon u\in\Ds,~u-\eta_\Omega\in \mathcal{D}^{s,2}(\R^n\setminus \Omega) \}.
	\end{equation*}
\end{definition}
First of all, we point out that the above definition does not depend on the choice of the cut-off function $\eta_\Omega$. Indeed, if $\tilde{\eta}_\Omega\in C_c^\infty(\R^n)$ satisfies $\tilde{\eta}_\Omega=1$ in a neighbourhood of $\Omega$, then trivially
\[
	\eta_\Omega+\DsO=\tilde{\eta}_\Omega+\DsO.
\]
We also observe that, if $\Omega\sub\R^n$ is a compact set, then, by a simple regularization argument, one can easily prove that
\[
	\Cs(\Omega)=\inf\{[u]_s^2\colon u\in C_c^\infty(\R^n), ~u\geq 1 ~\text{in }\Omega\}.
\]
We also point out that it is not restrictive to assume that the admissible competitors $u$ in the definition of $\Cs(\Omega)$ satisfy
\[
	0\leq u\leq 1,\quad\text{a.e. in }\R^n,
\]
since
\[
	[u^+\wedge 1]_s\leq [u]_s\quad\text{for all }u\in\Ds,
\]
where $u^+$ denotes the positive part of $u$ and $a\wedge b=\min\{a,b\}$. We refer to lemmas 2.6 and 2.7 in \cite{Warma2015} for the proofs.
\begin{remark}\label{rmk:potential}
	By direct methods of the calculus of variations, it is easy to check that $\Cs(\Omega)$ is uniquely achieved (when $\Cs(\Omega)<\infty$) by a function $u\in \Ds$ such that $u-\eta_\Omega\in \DsO$. Hereafter, we denote such function by $u_\Omega$ and we call it the $s$-\emph{capacitary potential} (or simply the \emph{capacitary potential}) associated to $\Omega$. Moreover, it is easy to observe that $0\leq u_\Omega\leq 1$ a.e. in $\R^n$ and that $u_\Omega$ satisfies a variational equation, that is
\begin{equation*}
	(u_\Omega,\varphi)_{\Ds}=0,\quad\text{for all }\varphi\in\DsO.
\end{equation*}
%By standard regularity results for linear equations involving the fractional Laplacian, we also have that $u_\Omega \in C^\infty(\R^n\setminus \Omega)$ (see, e.g. \cite{Silvestre2007}).
\end{remark}
The notion of $s$-capacity of a set is in relation with the fractional Laplace operator of order $s$, which is defined, for $u\in C_c^\infty(\R^n)$, as follows
 \begin{align*}
(-\Delta)^s u (x):&=2\,\mathrm{P.V.}\,\int_{\R^n}
\frac{u(x)-u(y)}{\abs{x-y}^{n+2s}}\,dy
\\
&=2\lim_{r\to 0^+}
\int_{\{\abs{x-y}>r\}}\frac{u(x)-u(y)}{\abs{x-y}^{n+2s}}\,dy,
\end{align*}
where $\mathrm{P.V.}$ means that the integral has to be seen in the principal value sense. It is natural to extend the definition of fractional Laplacian applied to any function in $\Ds$, in a distributional sense. More precisely, given $u\in\Ds$, we have that $(-\Delta)^s u\in (\Ds)^*$ (with $(\Ds)^*$ denoting the dual of $\Ds$) and it acts as follows
\[
	_{(\Ds)^*}\langle (-\Delta)^s u,v\rangle_{\Ds}=(u,v)_{\Ds},\quad\text{for all }v\in\Ds.
\]
Therefore, in view of Remark \ref{rmk:potential}, we can say that the capacitary potential $u_\Omega\in\Ds$ weakly satisfies
\[
	\begin{bvp}
	(-\Delta)^s u_\Omega &= 0, &&\text{in }\R^n\setminus \Omega, \\
	u_\Omega &=1, &&\text{in }\Omega.
	\end{bvp}
\]

\subsection{The extended formulation}\label{sec:extd}

The proof of our main result strongly relies on an extension procedure for functions in fractional Sobolev spaces, first established in \cite{Caffarelli2007}, which, in some sense, allows to avoid some nonlocal issues and recover a local framework. Such a procedure is commonly called \emph{Caffarelli-Silvestre extension}. In this paragraph, we introduce the functional spaces emerging in the extended formulation and we discuss some of their properties, also in relation with the $s$-capacity of a set. We remark that, being the Caffarelli-Silvestre extension a classical tool nowadays, the results we present here can be regarded as folklore. But still, up to our knowledge, there are no explicit proofs available in the literature, hence we decided to report them here.

For any closed set $K\sub\partial\R^{n+1}_+\simeq \R^n$, we define the space $\mathcal{D}^{1,2}(\overline{\R^{n+1}_+}\setminus K;z^{1-2s})$ as the completion of $C_c^\infty(\overline{\R^{n+1}_+}\setminus K)$ with respect to the norm 
\[
	\norm{U}_{\mathcal{D}^{1,2}(\overline{\R^{n+1}_+}\setminus K;z^{1-2s})}:=\left(\int_{\R^{n+1}_+}z^{1-2s}\abs{\nabla U}^2\,dx\,dz\right)^{\frac{1}{2}}.
\]
However  hereafter we simply write 
	$\mathcal{D}^{1,2}_z(\overline{\R^{n+1}_+}\setminus K)$ in place of $\mathcal{D}^{1,2}(\overline{\R^{n+1}_+}\setminus K;z^{1-2s})$.

We have that $\mathcal{D}^{1,2}_z(\overline{\R^{n+1}_+}\setminus K)$ is an Hilbert space with respect to the scalar product
\[
(U,V)_{\mathcal{D}^{1,2}_z(\overline{\R^{n+1}_+}\setminus K)}:=\int_{\R^{n+1}_+}z^{1-2s}\nabla U\cdot\nabla V\,dx\,dz.
\]
%For any open set $\mathcal{O}\sub \R^{n+1}=\R^n_x\times \R_z$ we define the space $\mathcal{D}^{1,2}(\mathcal{O};\abs{z}^{1-2s})$ as the completion of $C_c^\infty(\mathcal{O})$ with respect to the norm induced by the scalar product
%\[
%	(U,V)_{\mathcal{D}^{1,2}(\mathcal{O};\abs{z}^{1-2s})}:=\int_{\mathcal{O}}\abs{z}^{1-2s}\nabla U\cdot\nabla V\,dx\,dz
%\]
%and we set 
%\[
%\norm{U}_{\mathcal{D}^{1,2}(\mathcal{O};\abs{z}^{1-2s})}^{2}=(U,U)_{\mathcal{D}^{1,2}(\mathcal{O};\abs{z}^{1-2s})}.
%\]
%For any function in $U\in \mathcal D_{1,2}^{z}(\R^{n+1})$ we denote its restriction on $\R^{n+1}_{+}$ by $\widetilde U$ and notice that its vertical symmetrization,
%\[
%\widetilde U(x,z)=
%\begin{cases}
%U(x,z)\quad &\text{ if $z\ge0$}\\
%U(x,-z)\quad &\text{ if $z<0$}
%\end{cases}
%\]
%still belongs to $ \mathcal D_{1,2}^{z}(\R^{n+1})$. With a slight abuse of notation we set
%\[
% \mathcal D_{1,2}^{z}(\R^{n+1}):=\left\{ U:\mathbb R^{n+1}\to\R\,:\, \widetilde U \in  \mathcal D_{1,2}^{z}(\R^{n+1}) \right\}.
%\]
%The space $\Dz$ has a natural seminorm induced from that of $\mathcal D_{1,2}^{z}(\R^{n+1})$.
First of all we shot that the space $\mathcal{D}^{1,2}_z(\overline{\R^{n+1}_+}\setminus K)$ is a  well defined functional space, which is not obvious. In order to prove that, it is sufficient to prove that it is the case for $\mathcal{D}^{1,2}_z(\overline{\R^{n+1}_+})$, in view of the continuous embedding
\[
\mathcal{D}^{1,2}_z(\overline{\R^{n+1}_+}\setminus K)\hookrightarrow \mathcal{D}^{1,2}_z(\overline{\R^{n+1}_+}).
\]
To show this, we first recall by \cite[Proposition 3.3]{DMV2017} the following weighted Sobolev inequality
\begin{equation}\label{sobolev}
\left(\int_{\R^{n+1}_+} z^{1-2s}\abs{U}^{2\gamma}\,dx\,dz  \right)^\frac{1}{2\gamma}\le S_{n,s}'\left( \int_{\R^{n+1}_+} z^{1-2s}|\nabla U|^{2}\,dx\,dz\right)^{\frac12}\quad\text{for all }U\in \Dext,
\end{equation}
where $S_{n,s}'$ is a positive constant and $\gamma:=1+\frac{2}{n-2s}$. 
%Thanks to such an inequality the seminorm $[U]_{{\mathcal{D}^{1,2}_z(\R^{n+1})}}$ induces a  norm, which we define again as
%\[
%[U]^{2}_{\Dz}=(U,U)_{{\mathcal{D}^{1,2}_z(\R^{n+1}_{+})}}= \left(\int_{\R^{n+1}_+} z^{1-2s}U^{2\gamma}  \right)^\frac{1}{2\gamma}+\left( \int_{\R^{n+1}_+} z^{1-2s}|\nabla U|^{2}\right)^{\frac12}.
%\]
%Moreover it allows to characterize the space $\mathcal{D}^{1,2}_z(\R^{n+1}_{+})$ as a functional space. 
In particular this inequality yields the following continuous embedding
\[
	\mathcal{D}^{1,2}_z(\overline{\R^{n+1}_+})\hookrightarrow L^{2\gamma}(\R^{n+1}_+;z^{1-2s}),
\]
where 
\[
	L^{2\gamma}(\R^{n+1}_+;z^{1-2s}):=\left\{ U\in L^1_{\textup{loc}}(\R^{n+1}_+)\colon \int_{\R^{n+1}_+}z^{1-2s}\abs{U}^{2\gamma}\,dx\,dz <\infty \right\}.
\]
We now provide a characterization of $\Dext$ as a concrete functional space.
\begin{proposition}\label{D-Sobolev}
The space $\mathcal{D}^{1,2}_z(\overline{\R^{n+1}_{+}})$ is a functional space. In particular there holds
\[
\mathcal{D}^{1,2}_z(\overline{\R^{n+1}_+})=\left\{ U\in L^{2\gamma}(\R^{n+1}_+;z^{1-2s})\colon  \norm{U}_{\mathcal{D}^{1,2}_z(\overline{\R^{n+1}_+})}<+\infty  \right\}.
\]
\end{proposition}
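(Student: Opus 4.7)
The plan is to prove both inclusions, with the Sobolev inequality \eqref{sobolev} providing the concrete realization of the abstract completion. For ``$\subseteq$'', consider an equivalence class in $\Dext$, represented by a Cauchy sequence $(U_k)\subset C_c^\infty(\overline{\R^{n+1}_+})$ in the Dirichlet seminorm. By \eqref{sobolev}, $(U_k)$ is also Cauchy in the Banach space $L^{2\gamma}(\R^{n+1}_+;z^{1-2s})$, hence converges to some $U$ in that space and, up to a subsequence, almost everywhere. Two equivalent representatives have difference tending to zero in the Dirichlet seminorm, and therefore, again by \eqref{sobolev}, in $L^{2\gamma}(\R^{n+1}_+;z^{1-2s})$ as well; so the assignment of $U$ to the equivalence class is well-defined and injective. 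The weighted gradients $z^{(1-2s)/2}\nabla U_k$ form a Cauchy sequence in $L^2(\R^{n+1}_+)^{n+1}$, converging to some vector field $W$; since $U_k\to U$ in $L^{2\gamma}$, and hence distributionally, one identifies $W=z^{(1-2s)/2}\nabla U$ in the sense of distributions on $\R^{n+1}_+$. Thus $U$ belongs to the right-hand side, with $\Dext$-norm equal to that of the original equivalence class.

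For the reverse inclusion, given $U\in L^{2\gamma}(\R^{n+1}_+;z^{1-2s})$ with finite Dirichlet energy, I would construct an approximating sequence in $C_c^\infty(\overline{\R^{n+1}_+})$ in two steps. First, truncate via a smooth cutoff $\eta_R\in C_c^\infty(\R^{n+1})$ equal to $1$ on $\{|(x,z)|\le R\}$, supported in $\{|(x,z)|\le 2R\}$, with $|\nabla\eta_R|\le C/R$: expanding $\nabla(\eta_R U)=\eta_R\nabla U+U\nabla\eta_R$ and using H\"older's inequality together with $U\in L^{2\gamma}(\R^{n+1}_+;z^{1-2s})$ yields $\eta_R U\to U$ in the $\Dext$-norm as $R\to\infty$. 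Second, extend $\eta_R U$ evenly across $\{z=0\}$ to all of $\R^{n+1}$ (this preserves the weighted Dirichlet energy, since the extended weight $|z|^{1-2s}$ is symmetric) and convolve with a standard mollifier; the resulting approximants are smooth and compactly supported on $\R^{n+1}$, and their restrictions to $\overline{\R^{n+1}_+}$ lie in $C_c^\infty(\overline{\R^{n+1}_+})$.

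The main obstacle is the convergence of the mollification in the weighted Dirichlet norm, since the weight $z^{1-2s}$ degenerates or blows up at $z=0$. This is handled by the observation that $|z|^{1-2s}$ belongs to the Muckenhoupt class $A_2$ for every $s\in(0,1)$, as $1-2s\in(-1,1)$; classical results on mollification in weighted Sobolev spaces associated with $A_2$ weights then guarantee convergence in $L^2(\R^{n+1};|z|^{1-2s})$ both of the functions and of their weak gradients, completing the density argument and yielding the claimed characterization.
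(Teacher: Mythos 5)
Your proof is correct and follows the same overall strategy as the paper: the inclusion $\subseteq$ comes from the weighted Sobolev inequality, and the reverse inclusion is established by density, using a radial cutoff to reduce to compact support and an even reflection followed by mollification. Two points in your write-up are actually more careful than the paper's. First, for $\subseteq$ you explicitly verify that the assignment of a concrete function $U\in L^{2\gamma}(\R^{n+1}_+;z^{1-2s})$ to an abstract equivalence class is well-defined and injective, and that the weak gradient of $U$ is the $L^2(z^{1-2s})$-limit of the gradients of the approximating sequence; the paper simply says this ``immediately follows from \eqref{sobolev},'' which glosses over the identification of the completion with a function space, precisely the point the proposition is meant to address. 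Second, and more importantly, for the convergence of the mollification in the weighted Dirichlet norm the paper asserts pointwise convergence and equiboundedness and then invokes the dominated convergence theorem, which is not in itself sufficient, since one needs convergence of the gradients in $L^2(z^{1-2s})$ and a genuine dominating function is not exhibited. Your observation that $|z|^{1-2s}\in A_2$ for $s\in(0,1)$, together with the standard mollification theory in weighted Sobolev spaces for Muckenhoupt weights, fills this gap cleanly and is the correct way to justify that step. The only minor structural difference is the order of the two reductions (you truncate first, then mollify, whereas the paper mollifies compactly supported functions first and then treats the general case), but this is immaterial. Overall your proof is a tightened version of the paper's argument rather than a fundamentally different one.
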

\begin{proof}
The fact that
\[
	\mathcal{D}^{1,2}_z(\overline{\R^{n+1}_+})\sub\left\{ U\in L^{2\gamma}(\R^{n+1}_+;z^{1-2s})\colon  \norm{U}_{\mathcal{D}^{1,2}_z(\overline{\R^{n+1}_+})}<+\infty  \right\}
\]
immediately follows from \eqref{sobolev}. We now prove the reverse inclusion. Namely, we show that any function
\begin{equation}\label{eq:char_1}
 	U\in L^{2\gamma}(\R^{n+1}_+;z^{1-2s})
\end{equation}
such that
\begin{equation}\label{eq:char_2}
	\norm{U}_{\mathcal{D}^{1,2}_z(\overline{\R^{n+1}_+})}<+\infty
\end{equation}
can be approximated by functions in $C_c^\infty(\overline{\R^{n+1}_+})$ in the topology induced by the norm $\norm{\cdot}_{\Dext}$. First, suppose that $U$ is compactly supported in $\overline{\R^{n+1}_+}$ and let
\[
	\tilde{U}(x,z):=\begin{cases}
		U(x,z),&\text{if }z>0, \\
		U(x,-z), &\text{if }z<0.
	\end{cases}
\]
 Moreover, we let $\{\rho_{\eps}\}_{\eps>0}$ be a family of mollifiers\footnote{We call  mollifier a smooth, symmetric decreasing, positive and compactly supported function, which converges in distribution to a centered    Dirac measure $\delta$, as $\eps\to 0$, and such that $\|\rho_{\eps}\|_{L^1(\Rnn)}=1$.} in $\R^{n+1}$ and we set 
 \[
 	U_{\eps}={\tilde{U}\star\rho_{\eps}}_{\,\big|\overline{\R^{n+1}_+}}.
 \]
Clearly $U_{\eps}$ pointwisely converge to $U$ in $\overline{\R^{n+1}}$, as $\eps\to 0$. Moreover it is equibounded in $\Dext$. Thus we easily conclude by means of the dominated convergence theorem.
We consider now  the general case. Fix $\eps>0$ and let $U_{R}=U\eta_{R}$ where $\eta_{R}$ is the restriction to $\overline{\R^{n+1}_+}$ of a radial, smooth cut-off function defined on $\R^{n+1}$ such that $\eta_{R}=1$ on $B_{R}$, $\eta_{R}=0$ on $\R^{n+1}\setminus B_{2R}$ and $\sup_{\R^{n+1}}|\nabla\eta_{R}|\le 4R^{-1}$. Since, $U_{R}\in \Dext$, by the previous step there exists $V_{R}\in C^{\infty}_{c}(\overline{\R^{n+1}_{+}})$ such that $\norm{U_{R}-V_{R}}_{\Dext}\le \eps/2$, so that, by triangular inequality
\[
\norm{U-V_{R}}_{\Dext} \le \norm{U-U_{R}}_{\Dext}+\frac{\eps}{2}.
\]
We are left to show that $U_{R}\to U$ in $\Dext$, as $R\to\infty$. In view of the properties of $\eta_R$, we have that
\[
\begin{aligned}
\norm{U-U_{R}}^{2}_{\Dext}&=\int_{\Rnn_+}z^{1-2s}|\nabla U-\nabla (U\eta_{R})|^{2}\,dx\,dz\\
&\le 2\int_{\Rnn_+}z^{1-2s}|\nabla U-\eta_{R}\nabla U|^{2}\,dx\,dz+2\int_{\Rnn_+}z^{1-2s}|U\nabla\eta_{R}|^{2}\,dx\,dz\\
&\le 2\int_{\Rnn_+\setminus B_{R}^+}z^{1-2s}|\nabla U|^{2}\,dx\,dz+\frac{32}{R^2}\int_{B_{2R}^+\setminus B_{R}^+}z^{1-2s}\abs{U}^{2}\,dx\,dz,
\end{aligned}
\]
where $B_r^+:=B_r\cap\Rnn_+$. Thanks to \eqref{eq:char_2}, the first term on the right-hand side in the above inequalities is infinitesimal as $R$ tends to infinity, so it can be chosen smaller than $\eps/4$. For what concerns the second term, by  H\"older inequality we obtain that
\[
\int_{B_{2R}^+\setminus B_{R}^+}z^{1-2s}\abs{U}^{2}\,dx\,dz\le \left(\int_{B_{2R}^+\setminus B_{R}^+} z^{1-2s}\,dx\,dz \right)^{(\gamma-1)/\gamma} \left(\int_{B_{2R}^+\setminus B_{R}^+}z^{1-2s}\abs{U}^{2\gamma}\,dx\,dz\right)^{1/\gamma}.
\]
By \eqref{eq:char_1} we have that
\[
	\int_{B_{2R}^+\setminus B_{R}^+}z^{1-2s}\abs{U}^{2\gamma}\,dx\,dz\to 0,\quad\text{as }R\to \infty,
\]
while, by an explicit computation, also recalling that $\gamma=1+\frac{n}{n-2s}$ one gets that
\[
\sup_{R\ge1}\frac{1}{R^2} \left(\int_{B_{2R}^+\setminus B_{R}^+} z^{1-2s}\,dx\,dz \right)^{(\gamma-1)/\gamma}<+\infty.
\]
Hence we can choose $R$ large enough so that
\[
\frac{16}{R^2}\int_{B_{2R}^+\setminus B_{R}^+}z^{1-2s}U^{2}\,dx\,dz\le\eps/4,
\] 
and conclude that
\[
\norm{U-V_{R}}_{\Dext}\leq \eps.
\]
\end{proof}

Another fundamental fact that relates the space $\Dext$ with $\Ds$ is the existence of a trace map from the former to the latter. Before stating the precise result, we recall the following classical Hardy inequality, whose proof can be found e.g. in \cite{HLP1952}. 
\begin{lemma}\label{lemma:hardy}
	Let $p\in(1,\infty)$ and $a<1$. Then there exists a constant $C(p,a)>0$ such that
	\[
		\int_0^\infty \rho^a\abs{\frac{1}{\rho}\int_0^\rho f(t)\,dt}^p\,d\rho\leq C(p,a)\int_0^\infty\rho^a\abs{f(\rho)}^p\,d\rho,
	\]
	for all $f\in C_c^\infty([0,\infty))$.
\end{lemma}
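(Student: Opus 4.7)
The plan is to derive this classical estimate from Minkowski's integral inequality, by exploiting the scaling behaviour of the averaging operator. The key preliminary observation is the change of variable $t = \rho s$, which recasts the average as
\[
\frac{1}{\rho}\int_0^\rho f(t)\,dt = \int_0^1 f(\rho s)\,ds.
\]
In particular the left-hand side of the Hardy inequality equals $\bigl\|\int_0^1 f(\cdot\, s)\,ds\bigr\|_{L^p(\R_+;\rho^a d\rho)}^p$, which displays the quantity we wish to bound as the $L^p(\rho^a d\rho)$-norm of an integral in the auxiliary parameter $s\in (0,1)$.

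Next, I would apply Minkowski's integral inequality in the space $L^p(\R_+;\rho^a d\rho)$ to interchange the norm and the integral in $s$:
\[
\left(\int_0^\infty \rho^a\left|\int_0^1 f(\rho s)\,ds\right|^p d\rho\right)^{1/p} \le \int_0^1\left(\int_0^\infty \rho^a \abs{f(\rho s)}^p\,d\rho\right)^{1/p}ds.
\]
A further substitution $u=\rho s$ in the inner integral yields the scaling identity
\[
\int_0^\infty \rho^a\abs{f(\rho s)}^p\,d\rho = s^{-(a+1)}\int_0^\infty u^a\abs{f(u)}^p du,
\]
so that the claimed inequality reduces to the finiteness of $\int_0^1 s^{-(a+1)/p}\,ds$, giving the explicit constant
\[
C(p,a) = \left(\int_0^1 s^{-(a+1)/p}ds\right)^p = \left(\frac{p}{p-1-a}\right)^p.
\]

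The one point to verify is the integrability of $s^{-(a+1)/p}$ near the origin, which requires $(a+1)/p<1$, i.e.\ $a<p-1$. This is compatible with the standing assumption $a<1$ in the regime $p\ge 2$, which is the one actually used in the sequel (the paper will apply the lemma with $p=2$ and $a=1-2s\in(-1,1)$, for which the assumption $a<1$ is exactly $a<p-1$). Should the fully general case $p\in(1,\infty)$ be required, I would instead run the classical integration-by-parts proof: set $F(\rho)=\int_0^\rho f(t)\,dt$, observe that the boundary terms vanish because $f\in C_c^\infty([0,\infty))$ makes $F(0)=0$ and $F$ eventually constant, and combine the identity
\[
\int_0^\infty \rho^{a-p}F^p\,d\rho = \frac{p}{p-1-a}\int_0^\infty \rho^{a-p+1}F^{p-1}f\,d\rho
\]
with H\"older's inequality to extract the same constant.

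The proof is essentially a computation once the scaling substitution is made; the only substantive issue is checking the integrability condition that pins down the admissible range of $a$.
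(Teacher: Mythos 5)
The paper does not give a proof of this lemma; it simply cites Hardy--Littlewood--P\'olya's \emph{Inequalities}, so there is no internal proof to compare against. Your Minkowski-integral-inequality argument is a correct, standard, and clean proof, and it also produces the sharp constant $C(p,a)=\bigl(p/(p-1-a)\bigr)^p$.

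You are right to flag the hypothesis. The natural condition for this weighted Hardy inequality is $a<p-1$; the condition $a<1$ stated in the lemma is neither sufficient in general (for $1<p<2$ it allows $a\in[p-1,1)$, where the inequality actually fails: take a smooth nonnegative bump $f$ supported in $[1,2]$ with $\int f=1$, so $\frac{1}{\rho}\int_0^\rho f \sim 1/\rho$ for large $\rho$, and $\int^\infty\rho^{a-p}\,d\rho$ diverges when $a\ge p-1$) nor sharp (for $p>2$ one may have $1\le a<p-1$). The two coincide precisely for $p=2$, the only case the paper uses ($a=1-2s\in(-1,1)$), so the lemma as stated is fine for its intended application even if the hypothesis is misstated for general $p$.

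One small imprecision in your closing remark: the integration-by-parts proof does not extend the admissible range of $a$. The boundary term at infinity behaves like $\rho^{a-p+1}F(\rho)^p$ with $F$ eventually constant (nonzero in general), and this tends to $0$ only when $a<p-1$, exactly the same restriction as in the Minkowski computation. So the fallback you describe covers the same range, not ``the fully general case''---which is consistent with the counterexample above showing the inequality is genuinely false outside $a<p-1$.
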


\begin{proposition}\label{prop:trace}
	There exists a linear and continuous trace operator
	\[
		\Tr \colon \Dext\to \Ds
	\]
	such that $\Tr(U)(x)=U(x,0)$ for every $U\in C^{\infty}_{c}(\overline{ \R^{n+1}_{+}})$.
\end{proposition}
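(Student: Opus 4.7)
The plan is to define $\Tr$ first on the dense subspace $C_c^\infty(\overline{\R^{n+1}_+}) \subset \Dext$ by the literal formula $\Tr(U)(x) := U(x,0)$, prove the continuity estimate
\[
[U(\cdot,0)]_s^2 \leq C_{n,s}\,\|U\|_{\Dext}^2 \qquad \text{for all } U \in C_c^\infty(\overline{\R^{n+1}_+}),
\]
and then use density (built into the very definition of $\Dext$) together with completeness of $\Ds$ to extend $\Tr$ uniquely to a bounded linear operator on the whole space. On smooth compactly supported $U$ the trace is manifestly a member of $C_c^\infty(\R^n) \subset \Ds$, so the displayed inequality is the only substantive step.

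To prove the trace inequality I would pass to the Fourier transform in the tangential variable $x$, writing $\hat U(\xi,z)$ for the partial Fourier transform of $U(\cdot,z)$. Plancherel yields
\[
\|U\|_{\Dext}^2 = \int_{\R^n} \int_0^\infty z^{1-2s}\bigl(|\xi|^2\,|\hat U(\xi,z)|^2 + |\partial_z \hat U(\xi,z)|^2\bigr)\, dz\, d\xi,
\]
while the classical Fourier representation of the Gagliardo seminorm gives $[U(\cdot,0)]_s^2 = c_{n,s} \int_{\R^n} |\xi|^{2s} |\hat U(\xi,0)|^2\, d\xi$. For fixed $\xi\neq 0$, the rescaling $z \mapsto z/|\xi|$ reduces matters to the one-dimensional inequality
\[
|w(0)|^2 \leq C_s \int_0^\infty t^{1-2s}\bigl(|w(t)|^2 + |w'(t)|^2\bigr)\, dt \qquad \text{for all } w\in C_c^\infty([0,\infty)).
\]
I would then prove this by fixing $\chi \in C_c^\infty([0,\infty))$ with $\chi(0)=1$, writing $w(0) = -\int_0^\infty (\chi w)'(t)\, dt$, and applying the weighted Cauchy--Schwarz inequality with weight $t^{1-2s}$; the dual weight $t^{2s-1}$ is integrable near $0$ precisely because $s>0$, and it is integrable on the support of $\chi$ since $\chi$ is compactly supported. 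Alternatively, Lemma \ref{lemma:hardy} with exponents $p=2$ and $a=1-2s<1$, applied to $f=w'$ together with the identity $w(0)-w(\rho)=-\int_0^\rho w'(t)\,dt$, delivers the same estimate. Integrating the resulting pointwise-in-$\xi$ bound over $\R^n$ closes the argument.

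The main obstacle is the one-dimensional trace inequality above, which encodes continuous point evaluation at $0$ for the weighted Sobolev space associated with $z^{1-2s}$, and where the assumption $s>0$ enters crucially to ensure the local integrability of $t^{2s-1}$ at the origin. Once that estimate is in hand the density extension is routine: since $C_c^\infty(\overline{\R^{n+1}_+})$ is dense in $\Dext$ by construction and $\Tr$ is bounded into the Banach space $\Ds$ on this dense subspace, the bounded linear extension theorem yields the required $\Tr : \Dext \to \Ds$, agreeing with $U \mapsto U(\cdot,0)$ on smooth compactly supported functions.
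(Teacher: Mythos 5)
Your proposal is correct, but it takes a genuinely different route from the paper. You pass to the partial Fourier transform in $x$, use Plancherel to diagonalize the $\Dext$-norm, and after the rescaling $z\mapsto z/|\xi|$ reduce the entire estimate to the one-dimensional weighted trace inequality
\[
|v(0)|^2\leq C\int_0^\infty t^{1-2s}\bigl(|v(t)|^2+|v'(t)|^2\bigr)\,dt,
\]
which you then settle by Cauchy--Schwarz against a fixed cutoff. The paper instead stays entirely in physical space: it replaces $[\cdot]_s$ by the equivalent directional-difference seminorm $[\cdot]_{s,\#}$ (citing \cite[Proposition B.1]{BCV}), writes $U(x,0)$ and $U(x+\rho\bm e_i,0)$ by integrating $\partial_zU$ up to height $\rho$ with the increment size and the integration height deliberately matched, splits off a horizontal-derivative term and a vertical-derivative term, and controls the latter through the one-dimensional Hardy inequality of Lemma \ref{lemma:hardy}. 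Both methods end up invoking essentially the same weighted one-dimensional estimate; the parameter $|\xi|$ in your rescaling plays the role that the increment length $\rho$ plays in the paper. What the Fourier route buys you is conceptual transparency (the problem decouples fiberwise in $\xi$) at the mild cost of needing the Fourier characterization of the Gagliardo seminorm; the real-variable route is more robust (it would survive in an $L^p$ setting, where Plancherel is unavailable) at the cost of the slightly intricate pairing of horizontal and vertical increments. One small caveat: your alternative derivation of the 1D inequality via Lemma \ref{lemma:hardy} applied directly to $f=w'$ and $w(0)-w(\rho)=-\int_0^\rho w'$ is not quite immediate as stated, since $|w(0)|^2\int_0^\infty\rho^{1-2s}\,d\rho$ diverges; you would need to restrict to $\rho$ in a bounded interval first and track a support constant, which is awkward after the $\xi$-dependent rescaling. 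The Cauchy--Schwarz-with-cutoff argument you give first is clean and support-independent, so it is the one to use.
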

\begin{proof}
	
	Throughout the proof, we assume the space $\Ds$ to be endowed with the following norm
	\[
		[u]_{s,\#}:=\left( \sum_{i=1}^n[u]_{s,i}^2 \right)^{\frac{1}{2}}.
	\]
	where
	\[
		[u]_{s,i}^2:=\int_0^\infty\int_{\R^n}\frac{\abs{u(x+\rho \bm{e}_i)-u(x)}^2}{\rho^{1+2s}}\,dx\,d\rho,
	\]
	with $\bm{e}_i$ denoting the unit vector in the positive $x_i$ variable. The norm $[\cdot]_{s,\#}$ is equivalent to $[\cdot]_s$, as proved in \cite[Proposition B.1]{BCV}. By density, it is sufficient to prove that there exists $C>0$ such that
	\begin{equation}\label{eq:trace_1}
		[U(\cdot,0)]_{s,\#}\leq C\norm{U}_{\Dext},\quad\text{for all }U\in C_c^\infty(\overline{\R^{n+1}_+}).
	\end{equation}
	For $U\in C_c^\infty(\overline{\R^{n+1}_+})$, $x\in\R^n$ and $\rho>0$, we rewrite
	\begin{gather*}
		U(x,0)=U(x,\rho)-\int_0^\rho\frac{\partial U}{\partial z}(x,t)\,dt, \\
		U(x+\rho \bm{e}_i,0)=U(x+\rho \bm{e}_i,\rho)-\int_0^\rho\frac{\partial U}{\partial z}(x+\rho \bm{e}_i,t)\,dt.
	\end{gather*}
	Therefore
	\begin{multline*}
		\frac{\abs{U(x+\rho\bm{e}_i,0)-U(x,0)}^2}{\rho^{1+2s}}\leq 2\rho^{1-2s}\frac{\abs{U(x+\rho\bm{e}_i,\rho)-U(x,\rho)}^2}{\rho^2}\\
		+ 2\rho^{1-2s}\abs{\frac{1}{\rho}\int_0^\rho\left( \frac{\partial U}{\partial z}(x+\rho\bm{e}_i,t)-\frac{\partial U}{\partial z}(x,t) \right)\,dt }^2.
	\end{multline*}
	If we integrate in the $x$ variable we obtain 
	\begin{multline*}
		\int_{\R^n}\frac{\abs{U(x+\rho\bm{e}_i,0)-U(x,0)}^2}{\rho^{1+2s}}\,dx\leq 2\rho^{1-2s}\int_{\R^n}\abs{\frac{\partial U}{\partial x_i}(x,\rho)}^2\,dx\\
		+4\rho^{1-2s}\int_{\R^n}\abs{\frac{1}{\rho}\int_0^\rho\left(\frac{\partial U}{\partial z}(x,t)\,dt \right) }^2\,dx,
	\end{multline*}
	where we used the fact that
	\[
		\int_{\R^n}\frac{\abs{U(x+\rho\bm{e}_i,\rho)-U(x,\rho)}^2}{\rho^2}\,dx \leq \int_{\R^n}\abs{\frac{\partial U}{\partial x_i}(x,\rho)}^2\,dx
	\]
	for the first term and a change of variable for the second. By integration with respect to $\rho$ in $(0,\infty)$ and thanks to Lemma \ref{lemma:hardy} (choosing $p=2$ and $a=1-2s$) we infer
	\[
		[U(\cdot,0)]_{s,i}^2\leq 2\int_{\R^{n+1}_+}\rho^{1-2s}\abs{\frac{\partial U}{\partial x_i}(x,\rho)}^2\,dx\,d\rho +C\int_{\R^{n+1}_+}\rho^{1-2s}\abs{\frac{\partial U}{\partial z}(x,\rho)}^2\,dx\,d\rho,
	\]
	for some constant $C>0$ depending only on $s$. If we now sum for $i=1,\dots,n$ and take the square root, we obtain \eqref{eq:trace_1}, thus concluding the proof.

\end{proof}

The next step is to prove that the trace map introduced in the proposition above is onto. We first introduce the Poisson kernel of the upper half-space $\R^{n+1}_+$, defined as
\begin{equation*}
	P_z(x):=c_{n,s}\frac{z^{2s}}{(\abs{x}^2+z^2)^{\frac{n+2s}{2}}},\quad\text{for }(x,z)\in\R^{n+1}_+
\end{equation*}
where
\begin{equation}\label{eq:c}
	c_{n,s}:=\left( \int_{\R^n}\frac{1}{(1+\abs{x}^2)^{\frac{n+2s}{2}}}\,dx \right)^{-1}=\pi^{-\frac{n}{2}}\frac{\Gamma\left(\frac{n+2s}{2}\right)}{\Gamma(s)},
\end{equation}
is given in such a way that
\begin{equation*}
	\int_{\R^n}P_z(x)\,dx=1,\quad\text{for all }z>0,
\end{equation*}
see \cite[Remark 2.2]{FMM}. Essentially, a convolution with this kernel allows to extend to the upper half-space $\Rnn_+$ functions that are defined on $\R^n$. This is done first for functions in $C_c^\infty(\R^n)$ and then extended by density to the whole $\Ds$. Namely, we have the following.

\begin{proposition}\label{ext}
	Let $\varphi\in\Ds$. Then the function
	\begin{equation}\label{eq:ext_poisson_th1}
		U_\varphi(x,z):=(P_z\star \varphi)(x)
	\end{equation}
	belongs to $\Dext$ and 
	\begin{equation}\label{eq:ext_poisson_th2}
		\norm{U_\varphi}_{\Dext}^2=\alpha_{n,s}[\varphi]_s^2,
	\end{equation}
	where
	\begin{equation}\label{eq:alpha}
		\alpha_{n,s}:=\frac{s(1-s)}{\pi^{\frac{n}{2}}}\frac{\Gamma(1-s)\Gamma\left(\frac{n+2s}{2}\right)}{\Gamma(s)\Gamma(2-s)}>0.
	\end{equation}
	In particular the trace operator established in Proposition \ref{prop:trace} is surjective.
\end{proposition}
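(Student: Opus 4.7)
The plan is to establish the identity \eqref{eq:ext_poisson_th2} first for $\varphi\in C_c^\infty(\R^n)$ via a Fourier computation, and then extend by density to the whole $\Ds$.

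First I would fix $\varphi\in C_c^\infty(\R^n)$ and take the Fourier transform in the $x$ variable. The scaling $P_z(x)=z^{-n}P_1(x/z)$ forces $\hat{P}_z(\xi)=\phi(|\xi|z)$ for a single universal radial function $\phi\colon[0,\infty)\to\R$, so $\widehat{U_\varphi}(\xi,z)=\hat{\varphi}(\xi)\phi(|\xi|z)$. Writing $|\nabla U_\varphi|^2=|\nabla_x U_\varphi|^2+|\partial_z U_\varphi|^2$, applying Plancherel and substituting $t=|\xi|z$ produces
\begin{equation*}
\int_{\Rnn_+}z^{1-2s}|\nabla U_\varphi|^2\,dx\,dz=\left(\int_0^\infty t^{1-2s}\bigl(\phi(t)^2+\phi'(t)^2\bigr)\,dt\right)\int_{\R^n}|\xi|^{2s}|\hat{\varphi}(\xi)|^2\,d\xi.
\end{equation*}
The second factor equals $[\varphi]_s^2$ up to an explicit gamma-function constant, by the standard Fourier characterization of the Gagliardo seminorm. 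For the first factor I would exploit that $\phi$ solves the Caffarelli--Silvestre ODE $\phi''(t)+\tfrac{1-2s}{t}\phi'(t)=\phi(t)$ with $\phi(0)=1$ and rapid decay at infinity; multiplying by $t^{1-2s}\phi(t)$ and integrating by parts reduces the one-dimensional integral to the boundary term $-\lim_{t\to 0^+}t^{1-2s}\phi'(t)$, which is itself determined by the normalization constant $c_{n,s}$ in \eqref{eq:c}. Combining the two explicit constants gives precisely $\alpha_{n,s}$ as in \eqref{eq:alpha}.

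Once \eqref{eq:ext_poisson_th2} is known on $C_c^\infty(\R^n)$, the map $\varphi\mapsto U_\varphi$ is a linear, $\sqrt{\alpha_{n,s}}$-scaled isometry from a dense subspace of $\Ds$ into $\Dext$, and thus extends uniquely to a bounded operator on the whole $\Ds$ preserving \eqref{eq:ext_poisson_th2}. To verify that the abstract extension agrees pointwise with the Poisson formula \eqref{eq:ext_poisson_th1}, I would fix $(x,z)\in\Rnn_+$ and observe that $P_z(x-\cdot)\in L^{(2^*_s)'}(\R^n)$; hence $\varphi\mapsto(P_z\star\varphi)(x)$ is continuous on $\Ds$ by the Sobolev embedding \eqref{eq:sobolev_ineq}, forcing the two constructions to coincide. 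Surjectivity of $\Tr$ then follows immediately: by Proposition \ref{prop:trace} we have $\Tr(U_\varphi)=\varphi$ for smooth compactly supported $\varphi$, and continuity of both $\varphi\mapsto U_\varphi$ and $\Tr$ propagates this identity to the whole $\Ds$.

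The main obstacle will be the explicit identification of the constant $\alpha_{n,s}$: although Plancherel cleanly decouples the energy integral into a Fourier-side factor in $\xi$ and a one-dimensional factor in $t$, matching the product of the Fourier-seminorm constant and the Bessel-type boundary term against the precise expression \eqref{eq:alpha} requires careful gamma-function bookkeeping. The density extension and the verification of the pointwise Poisson formula are, by comparison, routine functional-analytic arguments.
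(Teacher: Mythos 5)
Your proposal follows the same overall structure as the paper's proof: establish the identity \eqref{eq:ext_poisson_th2} for $\varphi\in C_c^\infty(\R^n)$, extend by density, and then identify the abstract density-extension with the concrete Poisson convolution $U_\varphi$, using the embedding $\Ds\hookrightarrow L^{2^*_s}(\R^n)$ to show that $\varphi\mapsto (P_z\star\varphi)(x)$ is continuous on $\Ds$ for each fixed $(x,z)$. Where you genuinely diverge is on the first step: the paper simply cites \cite[Remark 3.11]{Cabre2014} for the value of $\alpha_{n,s}$, whereas you propose to derive it directly via Plancherel, the scaling $\widehat{P_z}(\xi)=\phi(|\xi|z)$, and integration by parts in the ODE $\phi''+\tfrac{1-2s}{t}\phi'=\phi$. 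This is the standard Caffarelli--Silvestre computation and it is a legitimate, self-contained route to the constant; the integration-by-parts reduction to the boundary term $-\lim_{t\to0^+}t^{1-2s}\phi'(t)$, combined with $\phi(0)=1$, does the job (one must also check $t^{1-2s}\phi\phi'\to 0$ at infinity, which follows from the Bessel-type decay of $\phi$). So for the constant you are filling in what the paper outsources, while the density/identification/surjectivity parts match the paper's argument closely.

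There is, however, one step you skip that the paper handles explicitly and that is not automatic. Computing that $\int_{\R^{n+1}_+}z^{1-2s}|\nabla U_\varphi|^2\,dx\,dz<\infty$ for $\varphi\in C_c^\infty(\R^n)$ does \emph{not} by itself put $U_\varphi$ in $\Dext$, since $\Dext$ is defined as the closure of $C_c^\infty(\overline{\R^{n+1}_+})$ and $U_\varphi$ is not compactly supported. The paper closes this gap by first checking $U_\varphi\in L^{2\gamma}(\R^{n+1}_+;z^{1-2s})$ and then invoking the characterization of $\Dext$ in Proposition \ref{D-Sobolev}. Your plan jumps from ``the weighted energy is finite'' to ``$\varphi\mapsto U_\varphi$ is an isometry \emph{into} $\Dext$'' without verifying membership in the completion; this is precisely where you need Proposition \ref{D-Sobolev} (or, equivalently, a direct cut-off/approximation argument exploiting the decay of $P_z\star\varphi$). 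Once that is inserted, your argument is complete and correct.
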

\begin{proof}
	For any $\varphi\in C_c^\infty(\R^n)$, we let
	\[
		(\mathrm{E}\varphi)(x,z):=(P_z\star \varphi)(x).
	\]
	It is easy to check that
	\begin{equation}\label{eq:ext_poisson_1}
		\norm{\mathrm{E}\varphi}_{\Dext}^2=\int_{\R^{n+1}_+}z^{1-2s}\abs{\nabla (\mathrm{E}\varphi)}^2\,dx\,dz=\alpha_{n,s}[\varphi]_s^2.
	\end{equation}
	For the computation of the explicit constant see e.g. \cite[Remark 3.11]{Cabre2014}. {Moreover, by the weighted Sobolev inequality \eqref{sobolev}
	\[
		\int_{\R^{n+1}_+}z^{1-2s}\abs{\mathrm{E}\varphi}^{2\gamma}\,dx\,dz<\infty.
	\]
	Hence, by Proposition \ref{D-Sobolev}, we have that $\mathrm{E}\varphi\in \Dext$.} Therefore the map
	\[
		\mathrm{E}\colon C_c^\infty(\R^n)\to \Dext
	\]
	is linear and continuous, thus it can be uniquely extended in the whole $\Ds$ and \eqref{eq:ext_poisson_1} still holds. We now prove that $\mathrm{E}\varphi=U_\varphi$ for any $\varphi\in\Ds$. Let $\varphi\in\Ds$ and $(\varphi_i)_i\sub C_c^\infty(\R^n)$ be such that $\varphi_i\to\varphi$ in $\Ds$ as $i\to\infty$. Thanks to \eqref{eq:sobolev_ineq}, we have that
	\[
		\varphi_i\to \varphi\quad\text{in }L^{2^*_s}(\R^n),\quad\text{as }i\to\infty,
	\]
	which, by definition, implies that
	\[
		\mathrm{E}\varphi_i=U_{\varphi_i}\to U_{\varphi}\quad\text{pointwise in }\R^{n+1}_+,\quad\text{as }i\to\infty.
	\]
	On the other hand $\mathrm{E}\varphi_i\to \mathrm{E}\varphi$ in $\Dext$ as $i\to\infty$; hence, up to a subsequence
	\[
		\mathrm{E}\varphi_i\to \mathrm{E}\varphi\quad\text{a.e. in }\R^{n+1}_+,\quad\text{as }i\to\infty.
	\]
	Therefore $\mathrm{E}\varphi=U_{\varphi}$ and the proof is complete.

\end{proof}

The following corollary, in view of the previous results, tells us that there is an isometry between the space $\Ds$ and the subspace of $\Dext$ containing the (unique) minimizers of a certain functional.

\begin{corollary}\label{cor-ext}
	Let $\varphi\in\Ds$. Then the minimization problem
	\begin{equation*}
		\min_{U\in\Dext}\left\{ \int_{\R^{n+1}_+}z^{1-2s}\abs{\nabla U}^2\,dx\,dz\colon \Tr U=\varphi \right\}
	\end{equation*}
	admits a unique solution, which coincides with $U_\varphi$ as in \eqref{eq:ext_poisson_th1}.	Moreover
	\begin{equation*}
		\begin{bvp}
		-\dive(z^{1-2s}\nabla U_\varphi)&=0, &&\text{in }\R^{n+1}_+, \\
		U_\varphi&=\varphi, &&\text{on }\R^n, \\
		-\lim_{z\to 0^+}z^{1-2s}\frac{\partial U_\varphi}{\partial z}&=\alpha_{n,s} (-\Delta)^s\varphi,&&\text{on }\R^n,
		\end{bvp}
	\end{equation*}
	in a weak sense, that is
	\[
	\int_{\R^{n+1}_+}z^{1-2s}\nabla U_\varphi\cdot\nabla V\,dx\,dz=\alpha_{n,s}(\varphi,\Tr V)_{\Ds},\quad\text{for all }V\in\Dext,
	\]
	where $\alpha_{n,s}>0$ is as in \eqref{eq:alpha}.
\end{corollary}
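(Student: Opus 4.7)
The plan is to combine direct methods in the Hilbert space $\Dext$ with the Poisson representation of Proposition \ref{ext}.

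First I would establish existence and uniqueness of a minimizer. The admissible class $\mathcal{A}_\varphi:=\{U\in\Dext : \Tr U=\varphi\}$ is nonempty (it contains $U_\varphi$ by Proposition \ref{ext}), affine and closed in $\Dext$ by continuity of the trace (Proposition \ref{prop:trace}). The functional $J(U):=\norm{U}_{\Dext}^{2}$ is the square of a Hilbert norm, hence strictly convex, continuous and coercive, so direct methods yield a unique minimizer $U^{*}\in\mathcal{A}_\varphi$ characterized by the Euler--Lagrange identity
\begin{equation*}
\int_{\R^{n+1}_+} z^{1-2s}\,\nabla U^{*}\cdot\nabla W\,dx\,dz=0\qquad\text{for every }W\in H_{0},
\end{equation*}
where $H_{0}:=\ker\Tr=\{W\in\Dext : \Tr W=0\}$.

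Next I would show that $U_\varphi$ itself satisfies this Euler--Lagrange relation, i.e.\ that $U_\varphi$ is $\Dext$-orthogonal to $H_{0}$. For $\varphi\in C_{c}^{\infty}(\R^{n})$ the Poisson extension $U_\varphi=P_z\star\varphi$ is smooth in $\R^{n+1}_+$, decays at infinity and satisfies $\dive(z^{1-2s}\nabla U_\varphi)=0$ classically. Testing against any $W\in C_{c}^{\infty}(\overline{\R^{n+1}_+})$ with $W(\cdot,0)\equiv 0$, integrating by parts on $\{z>\eps\}$ and sending $\eps\to 0^{+}$ kills every contribution: the divergence term vanishes by the equation, while the cap at $z=\eps$ disappears because $W(\cdot,\eps)\to 0$ and $\eps^{1-2s}\partial_{z}U_\varphi$ has a finite limit. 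This yields $(U_\varphi,W)_{\Dext}=0$ for smooth data, and density---of $C_{c}^{\infty}(\R^{n})$ in $\Ds$ (invoking $U_{\varphi_i}\to U_\varphi$ in $\Dext$ along approximating sequences, via Proposition \ref{ext}) and of the smooth test class inside $H_{0}$---extends the orthogonality to every $\varphi\in\Ds$ and every $W\in H_{0}$. Since $U^{*}-U_\varphi\in H_{0}$, the orthogonality forces $\norm{V}_{\Dext}^{2}=\norm{U_\varphi}_{\Dext}^{2}+\norm{V-U_\varphi}_{\Dext}^{2}$ for every admissible $V$, so the unique minimizer is $U^{*}=U_\varphi$.

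For the weak boundary identity, I would decompose an arbitrary $V\in\Dext$ as $V=U_{\Tr V}+(V-U_{\Tr V})$, the second summand lying in $H_{0}$. The orthogonality just established gives $(U_\varphi,V)_{\Dext}=(U_\varphi,U_{\Tr V})_{\Dext}$. Polarising the isometry $\norm{U_\psi}_{\Dext}^{2}=\alpha_{n,s}[\psi]_{s}^{2}$ of Proposition \ref{ext} (legitimate because $\psi\mapsto U_\psi$ is linear) produces $(U_\psi,U_\phi)_{\Dext}=\alpha_{n,s}(\psi,\phi)_{\Ds}$ for all $\psi,\phi\in\Ds$, and applying this with $\psi=\varphi$, $\phi=\Tr V$ yields the announced formulation.

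The main technical obstacle is the orthogonality $U_\varphi\perp H_{0}$ for general $\varphi\in\Ds$: the formal integration by parts is transparent for smooth data thanks to $W|_{z=0}=0$, but justifying the $\Dext$-density of smooth compactly supported test fields with null boundary trace inside $H_{0}$ requires handling the degenerate/singular Muckenhoupt-$A_{2}$ weight $z^{1-2s}$. Once that density is granted, everything else is standard Hilbert-space bookkeeping.
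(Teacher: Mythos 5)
Your Hilbert-space architecture (strict convexity for existence/uniqueness, orthogonality to $H_0=\ker\Tr$, Pythagoras, polarisation of the isometry of Proposition~\ref{ext}) is sound, and the paper itself does not spell out a proof here but merely cites \cite{BCV} and \cite{Caffarelli2007}. However, the route you choose forces you into the technical obstacle you flag at the end --- density in $H_0$ of smooth compactly supported test fields with null boundary trace --- and you leave that unresolved. That density is a genuine claim about the weighted space (it is not part of the definition of $\Dext$, which is the completion of $C_c^\infty(\overline{\R^{n+1}_+})$, not of the subclass vanishing on $\{z=0\}$), and without it your orthogonality $(U_\varphi,W)_{\Dext}=0$ is established only on a set that you have not shown to be dense in $\ker\Tr$, so minimality and the subsequent identity are not fully justified as written.

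The cleaner way to close this --- and the one that makes the density issue evaporate --- is to perform the integration by parts for \emph{arbitrary} $W\in C_c^\infty(\overline{\R^{n+1}_+})$, not only those with $W(\cdot,0)=0$. Since $\dive(z^{1-2s}\nabla U_\varphi)=0$ classically for $\varphi\in C_c^\infty(\R^n)$, integrating by parts on $\{z>\eps\}$ and letting $\eps\to 0^+$ leaves only the slab term, which by the Caffarelli--Silvestre boundary relation $-\lim_{z\to 0^+}z^{1-2s}\partial_z U_\varphi=\alpha_{n,s}(-\Delta)^s\varphi$ converges (by local uniform convergence, dominated on the compact support of $W$) to $\alpha_{n,s}\int_{\R^n}(-\Delta)^s\varphi\,W(\cdot,0)\,dx=\alpha_{n,s}(\varphi,\Tr W)_{\Ds}$. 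This gives the weak formulation
\[
\int_{\R^{n+1}_+} z^{1-2s}\nabla U_\varphi\cdot\nabla W\,dx\,dz=\alpha_{n,s}\,(\varphi,\Tr W)_{\Ds}
\]
directly for smooth data, and both sides are bilinear and continuous on $\Ds\times\Dext$ (via $\norm{U_\varphi}_{\Dext}=\sqrt{\alpha_{n,s}}\,[\varphi]_s$, Cauchy--Schwarz, and the continuity of $\Tr$ from Proposition~\ref{prop:trace}); the identity therefore extends to all $\varphi\in\Ds$ and all $W\in\Dext$ using only the density of $C_c^\infty(\overline{\R^{n+1}_+})$ in $\Dext$, which holds \emph{by definition}. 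Taking $W\in\ker\Tr$ then gives the orthogonality for free, after which your Pythagorean argument for minimality and your polarisation step go through unchanged and no polarisation is even needed, since the weak identity is already obtained. In short: swap the order --- derive the variational identity first, read off orthogonality second --- and the gap disappears.
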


\begin{proof}
	The proof is standard. For instance see \cite[Proposition 2.6]{BCV} for the first part and \cite{Caffarelli2007} for the second.
\end{proof}

\begin{remark}\label{rmk:ext_cap}
	In view of the extension procedure described above, we can relate the notion of $s$-capacity with another notion of (weighted) capacity of sets in $\R^{n+1}$. More precisely, with a slight abuse of notation, we call $\Dzc$ the completion of $C_c^\infty(\Rnn)$ with respect to the norm
\[
	\norm{U}_{\Dzc}:=\left(\int_{\Rnn}\abs{z}^{1-2s}\abs{\nabla U}^2\,dx\,dz\right)^{\frac{1}{2}}
\]
and, for any closed $K\sub\R^{n+1}$, we let
\begin{equation*}
	\C_{\R^{n+1}}(K;\abs{z}^{1-2s}):=\inf\left\{ \norm{U}_{\mathcal{D}^{1,2}_z(\R^{n+1})}^2\colon U\in\mathcal{D}^{1,2}_z(\R^{n+1}),~ U-\zeta_K\in\mathcal{D}^{1,2}(\R^{n+1}\setminus K) \right\},
\end{equation*}
where $\zeta_K\in C_c^\infty(\R^{n+1})$ is equal to $1$ in a neighborhood of $K$. Thanks to \eqref{eq:ext_poisson_th2}, after an even reflection in the $z$ variable, one can see that
\[
	\frac{1}{2}\C_{\R^{n+1}}(\Omega;\abs{z}^{1-2s})=\alpha_{n,s}\Cs(\Omega).
\] 
for any closed $\Omega\sub\R^n$. Moreover the unique function achieving $\C_{\R^{n+1}}(\Omega;\abs{z}^{1-2s})$ coincides with the even-in-$z$ reflection of $P_z\star u_\Omega$, with $u_\Omega\in\Ds$ denoting the capacitary potential of $\Cs(\Omega)$, as in Remark \ref{rmk:potential}. 

Hereafter we denote by
\begin{equation}\label{eq:potential_ext}
	U_\Omega(x,z):=(P_z\star u_\Omega)(x),\quad \text{for }(x,z)\in\R^{n+1}_+
\end{equation}
the restriction to the upper half-space of the potential associated to  $\C(\Omega;\abs{z}^{1-2s})$. Hence, $U_\Omega\in\Dext$ satisfies $0\leq U_\Omega\leq 1$ a.e. in $\R^{n+1}_+$ and weakly solves
\[
	\begin{bvp}
	-\dive(z^{1-2s}\nabla U_\Omega)&=0, &&\text{in }\R^{n+1}_+, \\
	U_\Omega&=1, &&\text{on }\Omega, \\
	-\lim_{z\to 0^+} z^{1-2s}\frac{\partial U_\Omega}{\partial z}&=0,&&\text{on }\R^n\setminus\Omega,
	\end{bvp}
\]
in the sense that $U_\Omega-\zeta_\Omega\in\DextO$ and
\[
	\int_{\R^{n+1}_+}z^{1-2s}\nabla U_\Omega\cdot\nabla V\,dx\,dz=0,\quad\text{for all }V\in\DextO.
\]
Thanks to \cite[Theorem 1.1]{Sire2021}, we can observe that $U_\Omega\in C^\infty(\overline{ \R^{n+1}_{+}}\setminus\Omega)$. Hence, in particular $u_\Omega\in C^\infty(\R^n\setminus\Omega)$.
\end{remark}

\begin{remark}\label{rmk:functional_setting}
	We emphasize a technical difference with respect to \cite{BCV}. Namely  that the functional setting we adopt here is tailored for the problem under investigation. Indeed, being the $s$-capacity obtained by minimization of the (nonlocal) energy, it is natural to expect the minimizer to have only finite seminorm $[\cdot]_s$, together with the possibility of approximating it by means of smooth and compactly supported functions, at least for compact $\Omega$. Therefore, any  other integrability assumption on the capacitary potential appears as artificial. Observe that, differently from our Proposition \ref{ext} and Corollary \ref{cor-ext}, in which the trace function $\varphi$ needs just to belong to $\mathcal D^{s,2}(\R^n)$, in the analogue extension result Proposition 2.6 of \cite{BCV} an additional integrability assumption on $\varphi$ is required. 
\end{remark}

\subsection{ Radial rearrangements and the isocapacitary inequality}

%{\color{red} Per ora copio gli enunciati che ci servono da [BCV] poi lo sistemiamo.}

This paragraph is devoted to the isocapacitary inequality \eqref{eq:frac_iso}. The classical and simplest proof of the (standard) isocapacitary inequality,
\begin{equation}\label{isocap}
	|\Omega|^{(2-n)/n}\C(\Omega)\geq  |B|^{(2-n)/n}\C(B)
\end{equation}
is by rearrangement: given any function $u:\R^n\to\R$, its symmetric decreasing rearrangement is the radial decreasing function $u^*:\R^n\to\R$ such that $|\{u>t\}|=|\{u^*>t\}|$. As a consequence of the P\'olya-Szeg\"o inequality
\[
\int_{\R^n} |\nabla u|^2\, dx\ge \int_{\R^n} |\nabla u^*|^2\, dx
\]
applied to the capacitary potential of a closed $\Omega\sub\R^n$ one can easily see that $\C(\Omega)\ge \C(B)$ as long as $|B|=|\Omega|<\infty$, from which, by scaling \eqref{isocap} holds as well. Indeed, the symmetric rearrangement of the capacitary potential of $\Omega$ coincides with the potential of a ball with the same volume as $\Omega$. Following this path, one can prove the fractional isocapacitary inequality using symmetric rearrangements for the extended problem in $\R^{n+1}_+$.

As in \cite{BCV,FMM}, we define in $\R^{n+1}_+$ the {\it partial Schwartz symmetrization} $U^*$ of a nonnegative function $U\in \mathcal{D}_z^{1,2}(\R^{n+1}_+)$. By construction, the function $U^*$ is obtained by taking for almost every $z>0$, the $n-$dimensional Schwartz symmetrization of the map
\[
x\mapsto U(x,z).
\]
More precisely: for almost every fixed $z>0$, the function $U^*(\cdot, z)$ is defined to be the unique radially symmetric decreasing function on $\mathbb R^n$ such that for all $t>0$
$$
|\{ U^*(\cdot, z)>t\}|=|\{  U(\cdot, z)>t\}|.
$$
\begin{proposition}\label{PS}
Let $\varphi\in \mathcal{D}^{s,2}(\R^{n})$ be a nonnegative function and let $U_\varphi\in\mathcal{D}^{1,2}_z(\overline{\R^{n+1}_+})$ as in \eqref{eq:ext_poisson_th1}. Then 
\[
U^*_\varphi\in \mathcal{D}^{1,2}_z(\overline{\R^{n+1}_+})
\]
and the following P\'olya-Szeg\"{o} type inequalities hold true
\begin{gather}
\int_{\mathbb{R}^{n+1}_+} z^{1-2\,s}\,|\nabla U_\varphi^*|^2\,dx\,dz\leq\int_{\mathbb{R}^{n+1}_+} z^{1-2\,s}\,|\nabla U_\varphi|^2\,dx\,dz, \label{polyaszego}\\
\int_{\mathbb{R}^{n+1}_+} z^{1-2\,s}\,|\partial_{z}U_\varphi^*|^2\,dx\,dz\leq\int_{\mathbb{R}^{n+1}_+} z^{1-2\,s}\,|\partial_z U_\varphi|^2\,dx\,dz.\label{polyaszegoz}
\end{gather}
Moreover, we have $\mathrm{Tr}(U_\varphi^*)=\varphi^*$.
In particular, we get
\begin{equation}
\label{energia}
\int_{\mathbb{R}^{n+1}_+} z^{1-2\,s}\,|\nabla U_\varphi^*|^2\,dx\,dz\ge \alpha_{n,s} [\varphi^*]_{\mathcal{D}^{s,2}(\R^{n})}^2.
\end{equation}
\end{proposition}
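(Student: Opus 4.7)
The plan is to deduce the statements in three stages: first the two gradient inequalities \eqref{polyaszego}, \eqref{polyaszegoz}, then the membership $U_\varphi^*\in\Dext$ together with the identification of the trace, and finally the consequence \eqref{energia}.

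For the gradient inequalities, I would split $|\nabla U_\varphi|^2=|\nabla_x U_\varphi|^2+|\partial_z U_\varphi|^2$ and treat each piece separately. For the tangential part, observe that $U_\varphi(\cdot,z)=P_z\star\varphi$ is smooth for every $z>0$ and that $U_\varphi^*(\cdot,z)$ is, by definition of the partial symmetrization, the $n$-dimensional Schwartz rearrangement of $U_\varphi(\cdot,z)$. The classical P\'olya-Szeg\"o inequality in $\R^n$, applied slicewise, gives
\[
\int_{\R^n}|\nabla_x U_\varphi^*(x,z)|^2\,dx\le\int_{\R^n}|\nabla_x U_\varphi(x,z)|^2\,dx\quad\text{for a.e. }z>0,
\]
and multiplying by $z^{1-2s}$ and integrating in $z$ yields the tangential contribution. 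For the normal part, the key tool is the non-expansivity of the Schwartz rearrangement on $L^2(\R^n)$, $\|f^*-g^*\|_{L^2(\R^n)}\le\|f-g\|_{L^2(\R^n)}$. Applied to $f=U_\varphi(\cdot,z+h)$ and $g=U_\varphi(\cdot,z)$, divided by $h^2$, multiplied by $z^{1-2s}$ and integrated in $z$, it produces a uniform (in $h$) upper bound on the weighted $L^2$-norm of the difference quotients of $U_\varphi^*$ in terms of the analogous quantity for $U_\varphi$. Sending $h\to 0^+$, using the $L^2(z^{1-2s})$-convergence of the difference quotients of $U_\varphi$ on the right and weak compactness plus lower semicontinuity on the left, one obtains a weak $z$-derivative for $U_\varphi^*$ in $L^2(z^{1-2s}\,dx\,dz)$ and the inequality \eqref{polyaszegoz}; combined with the tangential bound this yields \eqref{polyaszego}.

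For the membership $U_\varphi^*\in\Dext$, Proposition \ref{D-Sobolev} reduces the task to showing $U_\varphi^*\in L^{2\gamma}(\R^{n+1}_+;z^{1-2s})$ and $\|\nabla U_\varphi^*\|_{L^2(z^{1-2s})}<\infty$. The latter is \eqref{polyaszego}; the former is immediate from equimeasurability, which preserves all $L^p$-norms slicewise in $x$. As for $\Tr(U_\varphi^*)=\varphi^*$, the starting point is that $U_\varphi(\cdot,z)=P_z\star\varphi\to\varphi$ in $L^{2^*_s}(\R^n)$ as $z\to 0^+$ (standard Poisson-kernel approximation), hence by the $L^{2^*_s}$-non-expansivity of the rearrangement $U_\varphi^*(\cdot,z)\to\varphi^*$ in $L^{2^*_s}(\R^n)$. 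To match this pointwise-in-$z$ limit with the abstract trace of Proposition \ref{prop:trace}, I would proceed by density: for $\varphi\in C_c^\infty(\R^n)$ the extension $U_\varphi$ is smooth up to $\{z=0\}$, so $\Tr(U_\varphi^*)=\varphi^*$ follows directly from the definition of the trace; for general $\varphi\in\Ds$, approximate by $\varphi_k\in C_c^\infty(\R^n)$ with $\varphi_k\to\varphi$ in $\Ds$, use Proposition \ref{ext} to deduce $U_{\varphi_k}\to U_\varphi$ in $\Dext$, and pass to the limit in both the trace and the rearrangement by exploiting the continuity of $\Tr\colon\Dext\to\Ds$ together with the $L^{2^*_s}$-non-expansivity of the symmetrization.

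Finally, \eqref{energia} is immediate: by Corollary \ref{cor-ext}, the Poisson extension $U_{\varphi^*}$ is the unique minimizer of $V\mapsto\int_{\R^{n+1}_+}z^{1-2s}|\nabla V|^2$ among $V\in\Dext$ with $\Tr V=\varphi^*$, and its energy equals $\alpha_{n,s}[\varphi^*]_s^2$; since $U_\varphi^*$ is such a competitor by the previous paragraph, this gives the claimed bound. I expect the main obstacle to be the trace identification: the abstract trace is defined by extension from $C_c^\infty(\overline{\R^{n+1}_+})$, and its compatibility with the natural boundary limit $U_\varphi^*(\cdot,z)\to\varphi^*$ in $L^{2^*_s}(\R^n)$ must be established carefully, typically either by the density argument above or by a Hardy-type control near $\{z=0\}$ in the spirit of Lemma \ref{lemma:hardy}.
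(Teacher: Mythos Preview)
Your proposal is correct. The paper's own proof is a single line: reduce by density to $\varphi\in C_c^\infty(\R^n)$ and then invoke \cite[Proposition~3.2]{BCV}, where the result is already established in that setting. What you have written is essentially an unpacking of that citation---slicewise classical P\'olya--Szeg\H{o} for the $x$-gradient, $L^2$-non-expansivity of the rearrangement combined with a difference-quotient/weak-compactness argument for $\partial_z$, Proposition~\ref{D-Sobolev} for membership, and Corollary~\ref{cor-ext} for \eqref{energia}.

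The only structural difference is where the density reduction sits. The paper performs it at the very outset, which is convenient because the analogous statement in \cite{BCV} is formulated with an additional integrability assumption on $\varphi$ (cf.\ Remark~\ref{rmk:functional_setting}), so once $\varphi\in C_c^\infty$ that result applies verbatim. You instead argue directly for general $\varphi\in\Ds$ and reserve density only for the trace identification, which you correctly flag as the delicate point. Your route has the virtue of isolating exactly where density is genuinely needed, but it carries a small extra burden: for instance, in the $\partial_z$ step the slices $U_\varphi(\cdot,z)$ need not lie in $L^2(\R^n)$ individually (only in $L^{2^*_s}$), so the non-expansivity $\|f^*-g^*\|_{L^2}\le\|f-g\|_{L^2}$ must be invoked in the form valid for nonnegative functions vanishing at infinity whose \emph{difference} is square-integrable---which is fine, but worth making explicit.
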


\begin{proof}
By density, it is enough to show the result for $\varphi\in C^{\infty}_{c}(\R^{n})$. In that case, the proof follows directly by \cite[Proposition 3.2]{BCV}.
\end{proof}

{Now, a proof of the fractional isocapacitary inequality \eqref{eq:frac_iso} can be obtained as a direct consequence the previous result, applied to $\varphi=u_\Omega$, and Remark \ref{rmk:ext_cap}.  }

\section{Proof of the main result}\label{sec:proof}
In this Section we give the proof of our main result. The idea consists in introducing quantitative elements in the proof of the isocapacitary inequality established in the previous section.
As already explained in the Introduction, the major inconvenience when working with the extended problem in $\R^{n+1}_+$ consists in the fact that we need to transfer information on the superlevel sets of the extension of the capacitary potential $\{ U_\Omega(\cdot,z)\ge t\}$ for fixed $z>0$, to information on the superlevel sets of its trace $u_\Omega$ in $\R^n$. This was done in Section 4 of \cite{BCV} for a problem concerning the stability of the first eigenvalue of the Dirichlet fractional Laplacian.
%We recall here the main technical tools from \cite{BCV} that we will use in the sequel.

We start by recalling the following technical result, whose proof can be found in \cite[Lemma 4.1]{BCV}.
\begin{lemma}\label{trasfasimmetria}
Let $\Omega,E$ be two measurable subsets of $\R^n$ of finite measure and such that
\[
\frac{|\Omega\Delta E|}{|\Omega|}\le\frac{\delta}{3}\A(\Omega),
\]
for some $\delta\in(0,1)$. Then
\[
\A(E)\ge C_\delta \A(\Omega),
\]
{where
\begin{equation}\label{eq:c_gamma}
	C_\delta:=\frac{3-2\delta}{3+2\delta}.
\end{equation}}
\end{lemma}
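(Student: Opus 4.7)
The plan is to use the triangle inequality for symmetric differences, moving between optimal balls for $\Omega$ and $E$ and exploiting the fact that $\Omega$ and $E$ are close in $L^1$ by hypothesis.

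First I would let $B'$ be a ball with $|B'|=|E|$ realizing (or nearly realizing) the infimum in the definition of $\A(E)$, so that $|E\Delta B'|=\A(E)|E|$. Let $B$ denote the ball concentric with $B'$ but scaled so that $|B|=|\Omega|$. Applying the definition of the Fraenkel asymmetry to $\Omega$ tested against $B$, together with the iterated triangle inequality for symmetric differences, gives
\[
	\A(\Omega)|\Omega|\le |\Omega\Delta B|\le |\Omega\Delta E|+|E\Delta B'|+|B'\Delta B|.
\]

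Next I would estimate the two ``error'' terms. The first, $|\Omega\Delta E|$, is bounded directly by the hypothesis: $|\Omega\Delta E|\le \tfrac{\delta}{3}\A(\Omega)|\Omega|$. For the second, since $B$ and $B'$ are concentric balls one has $|B\Delta B'|=\bigl||B|-|B'|\bigr|=\bigl||\Omega|-|E|\bigr|\le |\Omega\Delta E|\le \tfrac{\delta}{3}\A(\Omega)|\Omega|$. Substituting these bounds back and rearranging yields
\[
	\A(E)|E|\ge \tfrac{3-2\delta}{3}\,\A(\Omega)|\Omega|.
\]

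Finally, to turn this into a lower bound on $\A(E)$ itself, I would estimate $|E|$ from above via $|E|\le |\Omega|+|\Omega\Delta E|\le |\Omega|\bigl(1+\tfrac{\delta}{3}\A(\Omega)\bigr)$ and apply the elementary bound $\A(\Omega)\le 2$ (which holds for every set of finite positive measure, since $|\Omega\Delta B|\le |\Omega|+|B|=2|\Omega|$) to obtain $|E|\le \tfrac{3+2\delta}{3}|\Omega|$. Dividing the previous display by $|E|$ then produces $\A(E)\ge \tfrac{3-2\delta}{3+2\delta}\,\A(\Omega)=C_\delta \A(\Omega)$, as desired. The argument is purely measure-theoretic and, since no regularity or rigidity assertion enters, the only (mild) point of care is the bookkeeping of the constants; no serious obstacle arises, consistently with this being a preliminary lemma quoted from \cite{BCV}.
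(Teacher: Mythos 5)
Your argument is correct, and it is the standard triangle-inequality proof for this type of ``transfer of asymmetry'' lemma. Since the paper does not present its own proof but instead cites \cite[Lemma 4.1]{BCV}, the comparison is against that reference; the route you take (comparing $\Omega$ to a near-optimal ball for $E$ via the triangle inequality for $|\cdot\,\Delta\,\cdot|$, bounding the concentric-ball correction by $||\Omega|-|E||\le|\Omega\Delta E|$, and then controlling $|E|$ from above using $\A(\Omega)\le 2$) is precisely the one used there, and the bookkeeping reproduces the stated constant $C_\delta=(3-2\delta)/(3+2\delta)$ exactly. The only minor point worth making explicit is that the infimum defining $\A(E)$ need not a priori be attained, so one should either take $B'$ with $|E\Delta B'|\le(\A(E)+\varepsilon)|E|$ and let $\varepsilon\to 0$ at the end, or invoke the known fact that the Fraenkel infimum is achieved; your parenthetical ``nearly realizing'' signals awareness of this and it does not affect the conclusion.
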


{The following lemma corresponds to Proposition 2.6 of \cite{BCV}. Observe that, 
here, the extension {$U_\varphi(\cdot, z)$ of a function $\varphi\in\Ds$} does not belong to $L^2(\mathbb R^n)$ for fixed $z\geq 0$; nevertheless, with the same computations as in \cite{BCV}, we can show that it is close enough (depending on $z$) in $L^2$ to its trace $u$. We report a proof of the lemma for the sake of completeness.}
\begin{lemma}\label{L2}
	{For any $\varphi\in \Ds$, denoting by $U_\varphi\in\Dext$ its extension,} there holds
	\begin{equation*}
		\norm{U_\varphi(\cdot,z)-\varphi}_{L^2(\R^n)}\leq {\sqrt{{c}_{n,s}}} [\varphi]_s\,z^{s},\quad\text{for  }z>0,
	\end{equation*}
	
%\begin{equation}\label{eq:tilde_c}
		%\tilde{c}_{n,s}=c_{n,s}\left(n\abs{B_1}\int_{0}^{\infty}\frac{r^{2n+2s-1}}{(r^2+1)^{n+2s}}\,dr\right)
	%\end{equation}
	{where $c_{n,s}$ is given in \eqref{eq:c}}.
\end{lemma}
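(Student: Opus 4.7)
The plan is to exploit directly the convolution representation $U_\varphi(x,z)=(P_z\star\varphi)(x)$, together with the fact that $P_z$ is a probability density on $\R^n$. First, I would assume that $\varphi\in C_c^\infty(\R^n)$, write
\[
U_\varphi(x,z)-\varphi(x)=\int_{\R^n} P_z(x-y)\bigl(\varphi(y)-\varphi(x)\bigr)\,dy,
\]
and apply the Cauchy--Schwarz inequality with respect to the probability measure $P_z(x-y)\,dy$, obtaining
\[
|U_\varphi(x,z)-\varphi(x)|^2\leq\int_{\R^n} P_z(x-y)\bigl(\varphi(y)-\varphi(x)\bigr)^2\,dy.
\]

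The key observation is the pointwise bound
\[
|x-y|^{n+2s}\,P_z(x-y)=c_{n,s}\,\frac{|x-y|^{n+2s}\,z^{2s}}{(|x-y|^2+z^2)^{(n+2s)/2}}\leq c_{n,s}\,z^{2s},
\]
which makes the fractional Gagliardo kernel $|x-y|^{-(n+2s)}$ appear naturally. Integrating in $x$ and applying Fubini, this yields
\[
\|U_\varphi(\cdot,z)-\varphi\|_{L^2(\R^n)}^2\leq c_{n,s}\,z^{2s}\int_{\R^{2n}}\frac{(\varphi(y)-\varphi(x))^2}{|x-y|^{n+2s}}\,dy\,dx=c_{n,s}\,z^{2s}\,[\varphi]_s^2,
\]
which is the sought inequality for smooth and compactly supported $\varphi$.

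Finally, I would extend the result to a general $\varphi\in\Ds$ by a density argument. Pick $\varphi_i\in C_c^\infty(\R^n)$ with $[\varphi_i-\varphi]_s\to 0$. By the fractional Sobolev inequality \eqref{eq:sobolev_ineq} we have $\varphi_i\to\varphi$ in $L^{2^*_s}(\R^n)$, hence pointwise a.e.\ along a subsequence; since $P_z(x-\cdot)\in L^p(\R^n)$ for every $p\in[1,\infty]$ when $z>0$, Hölder's inequality gives $U_{\varphi_i}(x,z)\to U_\varphi(x,z)$ for every $x\in\R^n$. Therefore $U_{\varphi_i}(\cdot,z)-\varphi_i\to U_\varphi(\cdot,z)-\varphi$ pointwise a.e.\ in $\R^n$, and Fatou's lemma combined with the bound just proved for the approximants yields the estimate for $\varphi$. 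The only subtle point is exactly this passage to the limit: neither $U_\varphi(\cdot,z)$ nor $\varphi$ individually belong to $L^2(\R^n)$, but the Fatou argument controls only their difference, which is the object we really need to estimate.
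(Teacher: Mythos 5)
Your proof is correct, and it takes a genuinely different route from the paper's. The paper first proves the auxiliary estimate
\[
\int_{\R^n}P_z(y)\,\norm{\tau_y\varphi-\varphi}_{L^2(\R^n)}\,dy\le\sqrt{c_{n,s}}\,z^s[\varphi]_s
\]
by Cauchy--Schwarz after inserting the weight $\abs{y}^{(n+2s)/2}$ (which requires evaluating $\int_{\R^n}P_z(y)^2\abs{y}^{n+2s}\,dy$), and then invokes Minkowski's integral inequality on $B_R$ together with Fatou as $R\to\infty$ to cope with the fact that neither $U_\varphi(\cdot,z)$ nor $\varphi$ is in $L^2(\R^n)$. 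You instead apply Jensen/Cauchy--Schwarz pointwise in $x$ against the probability measure $P_z(x-y)\,dy$, integrate in $x$, and use the elementary kernel bound $P_z(w)\abs{w}^{n+2s}\le c_{n,s}z^{2s}$, which produces the Gagliardo seminorm directly; the extension from $C_c^\infty$ to $\Ds$ is then achieved by density, pointwise convergence (via Sobolev embedding and H\"older against $P_z(x-\cdot)\in L^{(2^*_s)'}(\R^n)$), and Fatou. Both routes yield the same constant $\sqrt{c_{n,s}}$. Your version is slightly more economical in that it replaces the exact evaluation of $\int P_z^2\abs{y}^{n+2s}\,dy$ by a one-line pointwise inequality (in fact that display in the paper is only a $\le$, not the $=$ written there, as follows from $\abs{y}^{n+2s}\le(\abs{y}^2+z^2)^{(n+2s)/2}$), at the cost of the extra density/Fatou step at the end; you correctly identify the only delicate point there, namely that it is only the \emph{difference} $U_\varphi(\cdot,z)-\varphi$ that lands in $L^2$, and Fatou is applied precisely to that difference.
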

\begin{proof}
	We first prove the following preliminary fact
	\begin{equation}\label{eq:prelim}
		\int_{\R^n}P_z(y)\norm{\tau_y \varphi -\varphi }_{L^2(\R^n)}\,dy\leq \sqrt{c_{n,s}}z^s[\varphi]_s<\infty\quad\text{for all }\varphi\in \Ds~\text{and all }z\geq 0.
	\end{equation}
	Indeed, multiplying and dividing by $\abs{y}^{(n+2s)/2}$ and applying Cauchy-Schwartz inequality yields
	\begin{align*}
		\int_{\R^n}P_z(y)\norm{\tau_y \varphi -\varphi }_{L^2(\R^n)}\,dy &\leq\left(\int_{\R^n} P_z(y)^{2}\abs{y}^{n+2s}\,dy \right)^{\frac{1}{2}}\left( \int_{\R^n}\norm{\frac{\tau_y \varphi -\varphi }{\abs{y}^s}}_{L^2(\R^n)}^2\frac{dy}{\abs{y}^n} \right)^{\frac{1}{2}} \\
		&=\left(\int_{\R^n} P_z(y)^{2}\abs{y}^{{n+2s}}\,dy \right)^{\frac{1}{2}}[\varphi ]_s,
	\end{align*}
	where, in the last step, we used the fact that
	\[
	\int_{\R^n}\norm{\frac{\tau_y \varphi -\varphi }{\abs{y}^s}}_{L^2(\R^n)}^2\frac{dy}{\abs{y}^n}=[\varphi ]_s^2.
	\]
	The proof of \eqref{eq:prelim} ends by observing that
	\[
	\left(\int_{\R^n} P_z(y)^{2}\abs{y}^{n+2s}\,dy \right)^{\frac{1}{2}}={\sqrt{{c}_{n,s}}} z^{s}.
	\]
	Let us now consider $\varphi\in\Ds$. By definition of $U_\varphi $ and Minkowski's inequality we have that
	\begin{equation*}
		\norm{(U_\varphi (\cdot,z)-\varphi)\chi_{B_R} }_{L^2(\R^n)}\leq \int_{\R^n}P_z(y)\norm{\tau_y \varphi -\varphi }_{L^2(B_R)}\,dy,
	\end{equation*}
	for any $R>0$, where $\chi_{B_R}$ denotes the characteristic function of the ball $B_R$. We conclude the proof applying Fatou's Lemma for $R\to\infty$ and combining the resulting inequality with \eqref{eq:prelim}.
\end{proof}

{The following result allows us to focus on compact sets without loss of generality. Therefore hereafter in this section we always assume $\Omega\sub\R^n$ to be a compact set.}

\begin{lemma}[Reduction to compact sets]\label{lemma:compact}
	Let $\Omega\sub\R^n$ be a closed set with finite measure. Then
	\[
	\lim_{r\to\infty}\Cs(\Omega\cap \overline{B_r})=\Cs(\Omega)\quad\text{and}\quad \lim_{r\to\infty}\mathcal{A}(\Omega\cap \overline{B_r})=\mathcal{A}(\Omega),
	\]
	where $B_r:=\{x\in\R^n\colon \abs{x}<r \}$.
\end{lemma}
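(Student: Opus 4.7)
The statement splits into two independent convergences, which I would handle in sequence.

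For the capacity, monotonicity of $\Cs$ under inclusion---any competitor for the larger set restricts to one for the smaller---immediately yields both that $r\mapsto \Cs(\Omega\cap\overline{B_r})$ is nondecreasing and that $\Cs(\Omega\cap\overline{B_r})\le\Cs(\Omega)$ for every $r$. The nontrivial direction is the matching lower bound $\Cs(\Omega)\le\lim_{r\to\infty}\Cs(\Omega\cap\overline{B_r})$. My plan is to work with the capacitary potentials $u_r:=u_{\Omega\cap\overline{B_r}}\in\Ds$ provided by Remark \ref{rmk:potential}: they satisfy $0\le u_r\le 1$, $u_r\equiv 1$ a.e.\ on $\Omega\cap\overline{B_r}$ and $[u_r]_s^2=\Cs(\Omega\cap\overline{B_r})\le\Cs(\Omega)$. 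The uniform bound allows extracting a subsequence $u_r\rightharpoonup u$ weakly in $\Ds$ (which is Hilbert), and weak lower semicontinuity of the Gagliardo seminorm gives $[u]_s^2\le\liminf [u_r]_s^2$. The crucial step is showing that $u$ is admissible for $\Cs(\Omega)$: combining the Sobolev embedding \eqref{eq:sobolev_ineq} with the local compactness of the fractional Sobolev embedding, one extracts a further subsequence converging to $u$ pointwise a.e.\ in $\R^n$; since $\Omega\cap\overline{B_r}\uparrow\Omega$ and $u_r\equiv 1$ on $\Omega\cap\overline{B_r}$, we deduce $u\equiv 1$ a.e.\ on $\Omega$, which, together with $0\le u\le 1$, provides admissibility and closes the argument.

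For the asymmetry, the starting point is that $|\Omega\setminus\overline{B_r}|\to 0$ by dominated convergence (since $\Omega$ has finite measure), so $m_r:=|\Omega\cap\overline{B_r}|\to|\Omega|$ and $\chi_{\Omega\cap\overline{B_r}}\to \chi_\Omega$ in $L^1(\R^n)$. For the upper bound $\limsup_{r\to\infty}\A(\Omega\cap\overline{B_r})\le\A(\Omega)$, given $\varepsilon>0$ pick a ball $B$ with $|B|=|\Omega|$ and $|\Omega\Delta B|\le(\A(\Omega)+\varepsilon)|\Omega|$, and let $B_r$ be the concentric ball of volume $m_r$. The triangle inequality
\[
|(\Omega\cap\overline{B_r})\Delta B_r|\le|\Omega\Delta B|+|\Omega\setminus\overline{B_r}|+|B\Delta B_r|,
\]
together with $|B\Delta B_r|\to 0$ (as the radii converge), gives the claim after dividing by $m_r$ and letting $r\to\infty$ and then $\varepsilon\to 0$. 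For the matching lower bound, pick near-optimal balls $\tilde B_r$ in the infimum defining $\A(\Omega\cap\overline{B_r})$: their volume is bounded, and if their centers escaped to infinity then $|(\Omega\cap\overline{B_r})\Delta \tilde B_r|$ would approach $2m_r$, giving asymmetries arbitrarily close to $2$, which is not competitive; hence one may assume $\tilde B_r$ is contained in some fixed bounded region, so concentric balls of volume $|\Omega|$ are admissible competitors for $\A(\Omega)$, and the symmetric triangle-inequality computation concludes.

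The main obstacle is the admissibility check for the weak limit $u$ in the capacity argument: weak $\Ds$-convergence does not by itself control pointwise values, so one must pass through a local fractional Rellich--Kondrachov embedding to recover a.e.\ convergence along a further subsequence. A secondary delicate point is matching the resulting a.e.\ condition $u=1$ on $\Omega$ with the precise admissibility class $\eta_\Omega+\DsO$ appearing in Definition \ref{def:capacity_s}, which may require an additional approximation argument when $\Omega$ is unbounded.
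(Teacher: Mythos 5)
Your treatment of the asymmetry limit is correct, and the paper simply declares this part ``easy'' and omits it. One small simplification: the compactness discussion about the centers of the near-optimal balls $\tilde B_r$ is unnecessary, since \emph{any} ball of volume $|\Omega|$ is a legal competitor for $\A(\Omega)$. One may just replace $\tilde B_r$ with the concentric ball of volume $|\Omega|$ and run the same triangle inequality; no localization of the centers is needed.

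For the capacity, the issue you flag at the very end is the crux, not a secondary point, and as written the argument does not close. Note first that the lemma is only nontrivial when $\Omega$ is unbounded (otherwise $\Omega\cap\overline{B_r}=\Omega$ for $r$ large), so the ``when $\Omega$ is unbounded'' caveat covers the entire content of the statement. Admissibility in Definition~\ref{def:capacity_s} is $u-\eta_\Omega\in\DsO$, a quasi-everywhere constraint, not an almost-everywhere one: a closed set of finite measure may contain a piece of zero Lebesgue measure but positive $s$-capacity, and there the a.e.\ condition $u=1$ gives no information, so the weak (and a.e.) limit of the $u_r$ need not be admissible. The natural repair via weak closedness of the convex sets $\eta_K+\mathcal D^{s,2}(\R^n\setminus K)$ only yields, for each fixed $r_0$, that $u-\eta_{\Omega\cap\overline{B_{r_0}}}\in\mathcal D^{s,2}\bigl(\R^n\setminus(\Omega\cap\overline{B_{r_0}})\bigr)$, hence $[u]_s^2\ge\Cs(\Omega\cap\overline{B_{r_0}})$, which is exactly the quantity one started from -- the argument is circular. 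The paper avoids all of this by invoking the increasing-union continuity $\Cs\bigl(\bigcup_i\Omega_i\bigr)=\lim_i\Cs(\Omega_i)$ and citing \cite[Theorem~4.15, (viii)]{EG2015} for the adaptation. That argument also works with the potentials $u_i$, but it combines the parallelogram identity with the observation that $\tfrac12(u_i+u_j)$ is admissible for $\Omega_{\min(i,j)}$ to show the $u_i$ form a Cauchy sequence, so the convergence is \emph{strong}; that additional structure, rather than weak convergence and lower semicontinuity alone, is what lets one verify admissibility of the limit.
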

\begin{proof}
	The convergence of the capacity follows from the fact that, for any sequence of closed sets $\Omega_i\sub\R^n$ such that $\Omega_i\sub\Omega_{i+1}$, there holds
	\[
	\Cs(\cup_{i=1}^\infty \Omega_i)=\lim_{i\to\infty}\Cs(\Omega_i)
	\]
	which, in turn, can be proved by following step by step the proof of \cite[Theorem 4.15, Point \textrm{(viii)}]{EG2015}. The rest of the proof is easy and can be omitted.
\end{proof}

{Hereafter in this section, for $0\leq t\leq 1$ and $z\geq 0$, we let
\begin{equation*}
	\Omega_{t,z}:=\{ x\in\R^n\colon U_\Omega(x,z)\geq t \}\quad\text{and}\quad \Omega_t:=\Omega_{t,0}=\{x\in\R^n\colon u_\Omega(x)\geq t\}.
\end{equation*}
We notice that, by the continuity of $u_{\Omega}$, it follows that $|\Omega_t\Delta\Omega|\rightarrow 0$, as $t\to 1^{-}$. Moreover, we set, respectively
\begin{equation*}
	\mu_z(t):=|\Omega_{t,z}|\quad\text{and}\quad \mu(t):=\mu_0(t)=|\Omega_t|.
\end{equation*}
We immediately observe that $\mu$ is left-continuous and non-increasing in $(0,1)$ and that
\[
	\lim_{t\to 0^+}\mu(t)=+\infty\quad\text{and}\quad \lim_{t\to 1^-}\mu(t)=|\Omega|,
\]
where the last equality is a consequence of the weak maximum principle. We now let}
\begin{align}
	T=T(\Omega,{\gamma})&=\inf\left\{0\leq t\leq 1\colon|\{u_\Omega\geq t\}| \le |\Omega|\left(1+\gamma\mathcal A(\Omega)\right)\right\}, \label{T}\\
	&=\inf\left\{0\leq t\leq 1\colon \mu(t) \le |\Omega|\left(1+\gamma\mathcal A(\Omega)\right)\right\},\notag
\end{align}
The constant $\gamma$ is chosen in {$(0,1/9)$} and will be settled later on.
Notice that if $\A(\Omega)>0$ then $T<1$. {In addition, in view of the left-continuity of $\mu$, we know that
\begin{equation}\label{eq:mu_T}
	\mu(T)\geq\abs{\Omega}(1+\gamma\mathcal{A}(\Omega)).
\end{equation}}

The following proposition, which will be crucial in the proof of our main result, allows to bound from below the asymmetry of the superlevel sets of $U_\Omega(\cdot, z)$ with the asymmetry of $\Omega$ (for certain levels $t$ and for $z$ small enough).

\begin{proposition}
\label{prop:asimmetrie}
Let $\Omega$ be such that $\A(\Omega)>0$, {$\gamma \in (0, 1/9)$}, and let  $T\in(0,1)$ as in \eqref{T}. Set also $\widehat T=1-T$. Then, letting ${c}_{n,s}$ be as in \eqref{eq:c}, if
\[
 T+\frac18 \widehat T \le t\le T+\frac{3}{8}\widehat T \quad \mbox{and}\quad0<z\le z_{0}:={\left(\frac{\widehat{T}}{16}\sqrt{\frac{\gamma\mathcal{A}(\Omega) |\Omega|}{ {c}_{n,s}{\Cs(\Omega)}}}\right)^\frac{1}{s}},
\]
we have
\begin{equation}
\label{ok}
\frac{\Big|\big|\Omega_{t,z}\big|-|\Omega|\Big|}{|\Omega|}\le 3\gamma\,\mathcal{A}(\Omega),
\end{equation}
and
\begin{equation}
\label{asimmetria}
\mathcal{A}\big(\Omega_{t,z}\big)\ge c_\gamma\mathcal{A}(\Omega),
\end{equation}
where $c_\gamma=C_{3\gamma}$ and $C_{3\gamma}$ is as in \eqref{eq:c_gamma}.
\end{proposition}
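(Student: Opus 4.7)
I would build on the $L^2$-proximity of the extension $U_\Omega(\cdot,z)$ to its trace $u_\Omega$, provided by Lemma \ref{L2}, together with the volumetric margin built into the definition of $T$. Once $|\Omega_{t,z}\Delta\Omega|$ is controlled by a multiple of $\gamma\A(\Omega)|\Omega|$, both \eqref{ok} and \eqref{asimmetria} follow at once: the former is a direct measure estimate, the latter is an application of Lemma \ref{trasfasimmetria}.

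Since $u_\Omega$ realizes the infimum in Definition \ref{def:capacity_s}, one has $[u_\Omega]_s^2=\Cs(\Omega)$, so Lemma \ref{L2} specialises to
\[
\|U_\Omega(\cdot,z)-u_\Omega\|_{L^2(\R^n)}^2\leq c_{n,s}\,\Cs(\Omega)\,z^{2s}.
\]
For any $\delta>0$ the tautological inclusions
\[
\Omega_{t,z}\setminus\Omega_{t-\delta}\subseteq\{|U_\Omega(\cdot,z)-u_\Omega|>\delta\},\qquad \Omega_{t+\delta}\setminus\Omega_{t,z}\subseteq\{|U_\Omega(\cdot,z)-u_\Omega|>\delta\}
\]
hold, since in each case $u_\Omega$ and $U_\Omega(\cdot,z)$ are forced to lie on opposite sides of an interval of length $\delta$. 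Chebyshev's inequality then bounds the measure of either set by $c_{n,s}\Cs(\Omega)z^{2s}/\delta^2$.

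I would calibrate $\delta:=\widehat T/16$. For $t\in[T+\widehat T/8,T+3\widehat T/8]$ this guarantees $t-\delta\geq T+\widehat T/16>T$, so by \eqref{T} and the monotonicity of $\mu$ we have $|\Omega_{t-\delta}|\leq|\Omega|(1+\gamma\A(\Omega))$; and $t+\delta\leq T+7\widehat T/16<1$, so $\Omega\subseteq\Omega_{t+\delta}$ (because $u_\Omega=1$ on $\Omega$). The explicit form of $z_0$ is chosen precisely so that, for $z\leq z_0$,
\[
\frac{c_{n,s}\Cs(\Omega)z^{2s}}{\delta^2}\leq\gamma\A(\Omega)|\Omega|.
\]
Combining these ingredients,
\[
|\Omega\setminus\Omega_{t,z}|\leq\gamma\A(\Omega)|\Omega|,\qquad |\Omega_{t,z}\setminus\Omega|\leq|\Omega_{t,z}\setminus\Omega_{t-\delta}|+|\Omega_{t-\delta}\setminus\Omega|\leq 2\gamma\A(\Omega)|\Omega|,
\]
which gives \eqref{ok} in the sharper form $||\Omega_{t,z}|-|\Omega||\leq 2\gamma\A(\Omega)|\Omega|$ and also yields $|\Omega\Delta\Omega_{t,z}|/|\Omega|\leq 3\gamma\A(\Omega)$. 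Since the hypothesis $\gamma\in(0,1/9)$ puts the resulting ratio in the admissible range for Lemma \ref{trasfasimmetria}, that lemma produces \eqref{asimmetria} with the claimed constant $c_\gamma$.

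The main obstacle is the balancing of two independent error sources: the Chebyshev error $c_{n,s}\Cs(\Omega)z^{2s}/\delta^2$ coming from the $L^2$-proximity of $U_\Omega(\cdot,z)$ to $u_\Omega$, and the volumetric margin $\gamma\A(\Omega)|\Omega|$ built into the definition of $T$. Choosing $\delta$ too large breaks $t-\delta>T$ and destroys the volume bound on $\Omega_{t-\delta}$; choosing it too small blows up the $1/\delta^2$ factor. The admissible window $[T+\widehat T/8,\,T+3\widehat T/8]$ and the exact scaling $z_0\sim(\widehat T^{2}\gamma\A(\Omega)|\Omega|/\Cs(\Omega))^{1/(2s)}$ are designed precisely so that the symmetric choice $\delta=\widehat T/16$ makes both contributions comparable to $\gamma\A(\Omega)|\Omega|$ up to absolute constants.
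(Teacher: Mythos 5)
Your proof is correct and takes essentially the same route as the paper's: Chebyshev's inequality applied to the $L^2$ estimate of Lemma \ref{L2} to control $|\Omega_{t,z}\Delta\Omega|$ by sandwiching between superlevel sets of $u_\Omega$, and then Lemma \ref{trasfasimmetria}. Your symmetric choice of comparison levels $t\pm\widehat T/16$ is only a minor cosmetic variant of the paper's use of the fixed levels $T+\widehat T/2$ and $T+\widehat T/16$, and it calibrates to the same $z_0$.
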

\begin{proof}
Let $\tau=T+\frac{\widehat T}{2}$. First of all, we observe that, {by triangle inequality}
\begin{equation}\label{smiths}
\begin{aligned}
{\frac{||\Omega_{t,z}|-|\Omega||}{|\Omega|}\leq}\frac{|\Omega_{t,z}\Delta\Omega|}{|\Omega|}&= \frac{|\Omega_{t,z}\setminus\Omega|}{|\Omega|}+\frac{|\Omega\setminus \Omega_{t,z}|}{|\Omega|}\\
& \le\frac{|\Omega_{t,z}\setminus\Omega_{T+\frac{\widehat{T}}{16}}|}{|\Omega|} + \frac{|\Omega_{T+\frac{\widehat T}{16}}\setminus \Omega|}{|\Omega|} +  \frac{|\Omega\setminus \Omega_{t,z}|}{|\Omega|}\\
&\le \frac{|\Omega_{t,z}\setminus\Omega_{T+\frac{\widehat{T}}{16}}|}{|\Omega|}+\gamma\A(\Omega)+ \frac{|\Omega_{\tau}\setminus \Omega_{t,z}|}{|\Omega|}.
\end{aligned}
\end{equation}
where, in the last inequality we used {the definition of $T$ and the facts} that $T+\frac{\widehat{T}}{16}>T$ and  $\Omega\subseteq\Omega_\tau$. For $x\in\Omega_{\tau}\setminus \Omega_{t,z}$ we have that
\[
u_\Omega(x)-U_\Omega(x,z)\ge \tau-t\ge T+\frac{\widehat T}{2}- (T+\frac38\widehat T)=\frac18 \widehat T,
\]
which shows that $\Omega_{\tau}\setminus \Omega_{t,z}\subset\{|u_\Omega-U_\Omega(\cdot,z)|\ge \frac18\widehat T \}$. Hence, using Chebichev's inequality and {Lemma \ref{L2} with $\varphi=u_\Omega$}, it holds that
\[
\begin{aligned}
	\frac{|\Omega_{\tau}\setminus \Omega_{t,z}|}{|\Omega|} &\le\frac{|\{|u_\Omega-U_\Omega(\cdot,z)|\ge \frac18\widehat T \}|}{|\Omega|} \\ 
	&\le  \frac{64}{|\Omega|\widehat T^2}\|u_\Omega - U_\Omega(\cdot,z)\|^2_{L^2(\R^n)}\le \frac{64{c_{n,s}[u_\Omega]^2}}{|\Omega|\widehat T^2}z^{2s}\le\gamma \A(\Omega),
\end{aligned}
\]
as long as $z\le \left(\frac{\widehat{T}}{8}\sqrt{\frac{\gamma\mathcal{A}(\Omega) |\Omega|}{ {c}_{n,s}{\Cs(\Omega)}}}\right)^\frac{1}{s}$. Similarly, one can check that 
\[
\frac{|\Omega_{t,z}\setminus\Omega_{T+\frac{\widehat{T}}{16}}|}{|\Omega|}\le \gamma\A(\Omega).
\]
as long as $z\le \left(\frac{\widehat{T}}{16}\sqrt{\frac{\gamma\mathcal{A}(\Omega) |\Omega|}{ {c}_{n,s}{\Cs(\Omega)}}}\right)^\frac{1}{s}$.
This, together with \eqref{smiths} entails that 
\[
{\frac{||\Omega_{t,z}|-|\Omega||}{|\Omega|}\leq} 3\gamma\A(\Omega),
\]
{which proves \eqref{ok}.
Finally, applying Lemma \ref{trasfasimmetria} (with $\delta=3\gamma$ and $\gamma$ chosen to be in $(0,1/6)$), we deduce \eqref{asimmetria}.}
\end{proof}

We can now give the proof of our main result.

\begin{proof}[Proof of Theorem \ref{main}]
By scaling invariance, we may assume $|\Omega|=1$. {We also fix $\gamma=10^{-1}\in (0,1/9)$ throughout the proof.} We have to prove that
\begin{equation}\label{wanted}\Cs(\Omega)-\Cs(B)\ge C{\Cs(B)}\mathcal A(\Omega)^{\frac{3}{s}},\end{equation}
for some $C>0$, where $B$ is a ball with unit volume.

{We start by observing that if $\Cs(\Omega)>2\Cs(B)$, then, since $\mathcal A(\Omega)\leq 2$, we easily deduce that
$$\Cs(\Omega)-\Cs(B) >\Cs(B) =\frac{\Cs(B)}{2^{\frac{3}{s}}}2^{\frac{3}{s}} \geq\frac{\Cs(B)}{2^{\frac{3}{s}}}\mathcal A(\Omega)^{\frac{3}{s}},$$
which proves \eqref{wanted} with ${C= 2^{-\frac{3}{s}}}$.

Thus, we can just consider the case in which 
\begin{equation}\label{case-ok}
\Cs(\Omega)\le2\Cs(B).
\end{equation}
}
%
%For $t>0$ and $z>0$, we set as before
%$$\Omega_{t,z}=\{ U_\Omega(\cdot,z)>t\}\quad \mbox{and}\quad \mu_z(t)=|\Omega_{t,z}|.$$
%
We recall that in \eqref{T}, we have defined the level $T$ as
$$T=\inf\left\{0\leq t\leq 1\colon \mu(t) \le |\Omega|\left(1+\gamma\mathcal A(\Omega)\right)\right\}.$$
%for some {\color{blue}$\gamma\in(0,1/9)$} to be specified.
Notice that by Lemma \ref{trasfasimmetria} as long as $1\ge t\ge T$ it holds
\[
\A(\Omega_{t})\ge c_\gamma\A(\Omega),
\]
for some $c_\gamma$ independent of $t$ and $\Omega$.

%For a suitable constant $C_0$ to be chosen later  we set 
%$$T_0=1- \mathcal A(\Omega).$$

{We now distinguish between two cases, in terms of the relation of $T$ with $\mathcal{A}(\Omega)$. More precisely, we let
\begin{equation}\label{eq:def_kappa}
	\lambda:=\frac{n-2s}{n}\gamma\quad\text{and}\quad \kappa:=\frac{\lambda}{4(1+2\lambda)}.
\end{equation}
We consider the ranges
\begin{equation*}
	1-T\geq \kappa\mathcal{A}(\Omega)\quad\text{and}\quad 1-T<\kappa\A(\Omega).
\end{equation*}}

\noindent \underline{Case $1-T \ge \mathcal {\kappa}A(\Omega)$}. In this case we argue  as in the proof of Proposition 4.4 and of Theorem 1.3 (case $T>T_0$) in \cite{BCV}. The idea consists in introducing quantitative elements in the proof of the P\'olya-Szeg\"{o} inequality by applying the quantitative isoperimetric inequality on each (horizontal) level set $\Omega_{t,z}$ of the function $U_\Omega(\cdot, z)$. 
%Proposition \ref{prop:asimmetrie} will be crucial in order to pass from the asymmetry of $\Omega_{t,z}$ to the asymmetry of $\Omega$.
%Since, in this case, the proof is basically the same as the one in \cite{BCV}, we just sketch the main steps.{\color{red} B: cancellerei l'ultima frase}

First, we recall that
\begin{equation}
\label{conto0}
\Cs(\Omega)=[u_\Omega]_{\mathcal{D}^{s,2}(\mathbb{R}^n)}^2
=\alpha_{n,s}^{-1}\,\int_{\R^{n+1}_+} z^{1-2\,s}\,|\nabla_x U_\Omega|^2\,dx\,dz
+\alpha_{n,s}^{-1}\,\int_{\R^{n+1}_+} z^{1-2\,s}\,\left|\partial_z U_\Omega\right|^2\,dx\,dz.
\end{equation}
For what concerns the $z$-derivative, from \eqref{polyaszegoz} we know that
\[
\int_{\R^{n+1}_+} z^{1-2\,s}\,\left|\partial_z U_\Omega\right|^2\,dx\,dz\ge \int_{\R^{n+1}_+} z^{1-2\,s}\,\left|\partial_z U^*_{\Omega}\right|^2\,dx\,dz.
\]
For the $x$-derivative, we argue as in the local case. By the coarea formula, we have
\begin{equation}
\begin{aligned}
\label{conto1}
&\int_{\R^{n+1}_+} z^{1-2\,s}\,|\nabla_x U_\Omega|^2\,dx\,dz\\
&\hspace{2em}=\int_{0}^{+\infty}z^{1-2s}\left(\int_{0}^{+\infty}\left(\int_{\{x\in\mathbb{R}^n \, : \, U_\Omega(x,z)=t\}} \,|\nabla_x U_\Omega|^2\,\frac{d\mathcal{H}^{n-1}(x)}{|\nabla_x U_\Omega|}\right)\,dt\right)\,dz\\
&\hspace{2em}\ge \int_{0}^{+\infty}z^{1-2s}\left(\int_{0}^{+\infty}\frac{P(\Omega_{t,z})^2}{\displaystyle\int_{\{x\in\mathbb{R}^n \, : \, U_\Omega(x,z)=t\}}\frac{d\mathcal{H}^{n-1}(x)}{|\nabla_x U_\Omega|}}\,dt\right)\,dz
\end{aligned}
\end{equation}
where $P(\Omega_{t,z})$ denotes the perimeter of the set $\Omega_{t,z}$, and in the last step we have used Jensen's inequality. Using the quantitative isoperimetric inequality, one can prove that 
\begin{equation}\label{eq:isoperim}
P(\Omega_{t,z})^2\geq P(\Omega_{t,z}^*)^2+c_n\mu_z(t)^{\frac{2(n-1)}{n}}\mathcal{A}(\Omega_{t,z})^2,
\end{equation}
where 
\[
\Omega_{t,z}^*:=\{x\in\R^n\colon  U_\Omega^*(x,z)\geq t\},
\]
and
\begin{equation*}
	c_n:=2\omega_n^{\frac{2}{n}}\frac{(2-2^{\frac{n-1}{n}})^3}{(181)^2n^{12}}.
\end{equation*}
 {The proof of \eqref{eq:isoperim} can be easily carried out by following \cite[Lemma 2.9]{Brasco2017}, see also the proof of Proposition 4.4 in \cite{BCV}. By definition of symmetric rearrangement, we have that
\[
	\mu_z(t)=\abs{\Omega_{t,z}^*}\quad\text{for a.e. }t\in (0,1)
\]
and, from Lemma 3.2 and inequality $(3.19)$ in \cite{Cianchi2002} (see also $(2.6)$ in \cite{FMP08}), we know that
\[
	-\mu_z'(t)=\int_{\{x\in\mathbb{R}^n \colon U_\Omega^*(x,z)=t\}}\frac{d\mathcal{H}^{n-1}(x)}{|\nabla_x U_\Omega^*|}\geq \int_{\{x\in\mathbb{R}^n \, : \, U_\Omega(x,z)=t\}}\frac{d\mathcal{H}^{n-1}(x)}{|\nabla_x U_\Omega|}.
\]
Therefore, combining \eqref{conto1} and \eqref{eq:isoperim} with this last inequality, we can estimate the $L^2$-norm of the $x$-gradient as follows}
\[
\begin{split}
\int_{\mathbb{R}^{n+1}_+} z^{1-2\,s}\,|\nabla_x U_\Omega|^2\,dx\,dz
&\ge \int_{0}^{+\infty}z^{1-2s}\left(\int_{0}^{+\infty}\frac{P(\Omega_{t,z}^*)^2}{-\mu_z'(t)}\,dt\right)\,dz\\
&+c_n\int_{0}^{+\infty} z^{1-2\,s}\,\left(\int_0^{+\infty}\frac{\left(\mu_z(t)^\frac{n-1}{n}\right)^2\,\mathcal{A}(\Omega_{t,z})^2}{-\mu'_z(t)} dt\right)\,dz\\
& = \int_{\mathbb{R}^{n+1}_+} z^{1-2\,s}\,|\nabla_x U^*_\Omega|^2\,dx\,dz \\
&+c_n\int_{0}^{+\infty} z^{1-2\,s}\,\left(\int_0^{+\infty}\frac{\left(\mu_z(t)^\frac{n-1}{n}\right)^2\,\mathcal{A}(\Omega_{t,z})^2}{-\mu'_z(t)} dt\right)\,dz.\\
\end{split}
\]
Moreover, we have seen in the proof of Theorem \ref{PS} that
\[
\alpha_{n,s}^{-1}\,\int_{\R^{n+1}_+} z^{1-2\,s}\,|\nabla U^*_\Omega|^2\,dx\,dz\geq\Cs(B).
\]
Collecting all together, we obtain
 \[
\begin{split}
\Cs(\Omega)&=\alpha_{n,s}^{-1}\,\int_{\R^{n+1}_+} z^{1-2\,s}\,|\nabla_x U_\Omega|^2\,dx\,dz
+\alpha_{n,s}^{-1}\,\int_{\R^{n+1}_+} z^{1-2\,s}\,\left|\partial_z U_\Omega\right|^2\,dx\,dz\\
&\geq\Cs(B)+{\alpha_{n,s}^{-1}}c_n\int_{0}^{+\infty} z^{1-2\,s}\,\left(\int_0^{+\infty}\frac{\left(\mu_z(t)^\frac{n-1}{n}\right)^2\,\mathcal{A}(\Omega_{t,z})^2}{-\mu'_z(t)} dt\right)\,dz\\
%&\geq\mathrm{Cap}_{s}(B)+2\,\gamma_{n,s}N\,\omega_N^{1/N}\,\Theta_N\,\int_{0}^{+\infty} z^{1-2\,s}\,\left(\int_{z_0}^{1}\frac{\left(\mu_z(t)^\frac{N-1}{N}\right)^2\,\mathcal{A}(\Omega_{t,z})^2}{-\mu'_z(t)} dt\right)\,dz\\
\end{split}
\]
 We use now Proposition \ref{prop:asimmetrie}, to pass from $\mathcal{A}(\Omega_{t,z})$ to $\mathcal{A}(\Omega)$. {Let $z_0=\left(\frac{\widehat{T}}{16}\sqrt{\frac{\gamma\mathcal{A}(\Omega) |\Omega|}{ {c}_{n,s}{\Cs(\Omega)}}}\right)^\frac{1}{s}$ be as in Proposition \ref{prop:asimmetrie}}.  Set also $\widehat T=1-T$.
We have
 \[
\begin{split}
\Cs(\Omega)-\Cs(B)&\ge {\alpha_{n,s}^{-1}}c_n\int_{0}^{+\infty} z^{1-2\,s}\,\left(\int_{0}^{+\infty}\frac{\left(\mu_z(t)^\frac{n-1}{n}\right)^2\,\mathcal{A}(\Omega_{t,z})^2}{-\mu'_z(t)} dt\right)\,dz\\
&\ge {\alpha_{n,s}^{-1}}c_n\,\int_{0}^{z_0} z^{1-2\,s}\,\left( \int_{{T+\frac{\widehat T}{8}}}^{T+\frac{3}{8}\widehat T} \mathcal{A}(\Omega_{t,z})^2\,\frac{\left(\mu_{z}(t)^\frac{n-1}{n}\right)^2}{-\mu'_z(t)}\,dt\right)\,dz\\
&\ge{\alpha_{n,s}^{-1}} c_n\,c_\gamma^{{2}}\,\mathcal{A}(\Omega)^2\,\int_{0}^{z_0} z^{1-2\,s}\,\left( \int_{{T+\frac{\widehat T}{8}}}^{T+\frac{3}{8}\widehat T}\frac{\left(\mu_{z}(t)^\frac{n-1}{n}\right)^2}{-\mu'_z(t)}\,dt\right)\,dz,
\end{split}
\]
where, in the last inequality, we used \eqref{asimmetria}.
{It remains to estimate the term
\begin{equation}\label{last}
\int_{0}^{z_0} z^{1-2\,s}\int_{T+\frac{\widehat T}{8}}^{T+\frac{3}{8}\widehat T}\frac{\left(\mu_{z}(t)^\frac{n-1}{n}\right)^2}{-\mu'_z(t)}\,dt\,dz.
\end{equation}
First, we observe that, since $\gamma<1/9$, using \eqref{ok} and the fact that $\mathcal A(\Omega) \leq2$, we have
$$\mu_z(t)\ge 1-3\gamma \mathcal A(\Omega)\geq \frac{1}{3}.$$
Hence, in order to estimate \eqref{last}, it is enough to control from below the quantity
$$\int_{0}^{z_0} z^{1-2\,s}\int_{T+\frac{\widehat T}{8}}^{T+\frac{3}{8}\widehat T}\frac{1}{-\mu'_z(t)}\,dt\,dz.$$
By Jensen inequality and recalling the definition of $\mu_z$, we have
\[\int_{T+\frac{\widehat T}{8}}^{T+\frac{3}{8}\widehat T}\frac{1}{-\mu'_z(t)}\,dt\ge \frac{\widehat T^2}{16} \frac{1}{\int_{T+\frac{\widehat T}{8}}^{T+\frac{3}{8}\widehat T}-\mu_z'(t)\,dt} \ge \frac{\widehat T^2}{16}\frac{1}{|\Omega_{T+\frac{\widehat T}{8},z}|-|\Omega_{T+\frac{3}{8}\widehat T,z}|}.\]
Using again \eqref{ok}, we deduce that
$$|\Omega_{T+\frac{\widehat T}{8},z}|-|\Omega_{T+\frac{3}{8}\widehat T,z}|\le 1+3\gamma \mathcal A(\Omega) - {(1-3\gamma\mathcal{A}(\Omega))}={6}\gamma \mathcal A(\Omega).$$
{We can now estimate \eqref{last} as follows
\begin{equation*}
	\int_{0}^{z_0} z^{1-2\,s}\int_{T+\frac{\widehat T}{8}}^{T+\frac{3}{8}\widehat T}\frac{\left(\mu_{z}(t)^\frac{n-1}{n}\right)^2}{-\mu'_z(t)}\,dt\,dz\geq C_1 \frac{\widehat{T}^2}{\mathcal{A}(\Omega)}\int_{0}^{z_0} z^{1-2\,s}\,dz,
\end{equation*}
where $C_1>0$ depends only on $n$ and $\gamma$. This, in turn, yields the following estimate for the capacity variation
\begin{equation*}
	\Cs(\Omega)-\Cs(B)\geq \frac{C_2}{\alpha_{n,s}}\mathcal{A}(\Omega)\widehat{T}^2 \int_{0}^{z_0} z^{1-2\,s}\,dz,
\end{equation*}
with $C_2>0$ depending only on $n$ and $\gamma$.
Finally, recalling the definition of $z_0$ (in Proposition \ref{prop:asimmetrie}) and that we are in the ranges $\Cs(\Omega)\leq 2\Cs(B)$ and $\widehat T=1-T\ge \mathcal \kappa \A(\Omega)$, we obtain 
\begin{align*}
	\Cs(\Omega)-\Cs(B)&\geq \frac{C_3}{\alpha_{n,s}}\left(\frac{C_4}{c_{n,s}} \right)^{\frac{1}{s}-1}\frac{1}{(1-s)\Cs(B)^{\frac{1}{s}}}\Cs(B)\mathcal{A}(\Omega)^{\frac{1}{s}}\widehat{T}^{\frac{2}{s}} \\
	&\geq \frac{\kappa^{\frac{2}{s}}C_3}{\alpha_{n,s}}\left(\frac{C_4}{c_{n,s}}\right)^{\frac{1}{s}-1}\frac{1}{(1-s)\Cs(B)^{\frac{1}{s}}}\Cs(B)\mathcal{A}(\Omega)^{\frac{3}{s}},
\end{align*}
with $C_3,C_4>0$ depending on $n$ and $\gamma$. In particular, given $\gamma=10^{-1}$ it is possible to see that
\begin{equation}\label{eq:C_3}
	C_3=\frac{5}{3^5(181)^2}\frac{(3\omega_n)^{\frac{2}{n}}}{n^{12}}(1-2^{-\frac{1}{n}})^3.
\end{equation}
 Therefore \eqref{wanted} holds with
\begin{equation*}
	C=\frac{\kappa^{\frac{2}{s}}C_3}{\alpha_{n,s}}\left(\frac{C_4}{c_{n,s}}\right)^{\frac{1}{s}-1}\frac{1}{(1-s)\Cs(B)^{\frac{1}{s}}}.
\end{equation*}
}

%\[
%\begin{split}\Cs(\Omega)-\Cs(B)&\ge c_{n,\gamma} \mathcal A(\Omega) \widehat T^2\cdot \int_0^{z_0}z^{1-2s}\,dz\\
%&\ge \frac{c_{n,\gamma}}{2(1-s)}\mathcal A(\Omega)^3 z_0^{2(1-s)} \ge \frac{c_{n,\gamma}}{(1-s)\Cs(\Omega)^{\frac{1}{s}}}\mathcal A(\Omega)^{\frac{3}{s}}\\
%&\ge ,\frac{c_{n,\gamma}}{(1-s)\Cs(B)^{\frac{1}{s}}}\mathcal A(\Omega)^{\frac{3}{s}}
%\end{split}\]
%%
%where $c_{n,\gamma}$ is some positive constant depending only on $n$ and $\gamma$ (which can varies from line to line).
%This concludes the proof of \eqref{quantitativa} in the case $1-T\le \A(\Omega)$.
}

\noindent \underline{Case $1-T< {\kappa} \A(\Omega)$} 
In this case we get the quantitative inequality by means of a suitable  test  function for the definition of $\Cs$.
Let us define
$$w_T=\min\left\{1,\frac{u_\Omega}{T}\right\}.$$
{It is possible to see that $w_T$ is an admissible competitor for $\Cs(\Omega_T)$. Indeed, let $\xi_k:=\tilde{\xi}_k\star \rho_{\frac{1}{2k}}$, where $\rho_\epsilon$ denotes a standard mollifier in $\mathbb{R}$ and $\tilde{\xi}_k\colon \mathbb{R}\to \mathbb{R}$ is defined as follows
\[
\tilde{\xi}_k(\sigma):=\begin{cases}
	0, &\text{if }\sigma\leq \frac{1}{k}, \\
	\frac{k}{k-2}\left(\sigma-\frac{1}{k}\right),&\text{if }\frac{1}{k}\leq \sigma\leq 1-\frac{1}{k}, \\
	1,&\text{if }\sigma\geq 1-\frac{1}{k}.
\end{cases}
\]
It is clear that $\xi_k\in C^\infty(\mathbb{R})$ and that
\[
\xi_k(\sigma)\to \min\{ \sigma^+,1 \},\quad\text{uniformly as }k\to\infty.
\]
Moreover, if we let $w_k:=\xi_k\circ w_T$, one can see that $w_k\in\mathcal{D}^{s,2}(\mathbb{R}^n)$ and, thanks to the continuity of $w_T$, that $w_k=1$ in an open $\Omega_k\supseteq \Omega_T$, thus implying that $w_k-\eta_{\Omega_T}\in\mathcal{D}^{s,2}(\mathbb{R}^n\setminus\Omega_T)$, being $\eta_{\Omega_T}\in C_c^\infty(\mathbb{R}^n)$ such that $\eta_{\Omega_T}=1$ in a neighbourhood of $\Omega_T$. Finally, it is easy to check that $w_k\to w_T$ in $\mathcal{D}^{s,2}(\mathbb{R}^n)$ as $k\to \infty$, which means that $w_T-\eta_{\Omega_T}\in\mathcal{D}^{s,2}(\mathbb{R}^n\setminus\Omega_T)$.}
{Therefore, we have
\begin{equation*}
%\begin{split}
\Cs(\Omega_T)\le [w_T]^2_s\le \frac{1}{T^2} [u_{\Omega}]^2_s=\frac{1}{T^2}\Cs(\Omega).
%\end{split}
\end{equation*}
From the previous inequality, the isocapacitary inequality \eqref{eq:frac_iso} and \eqref{eq:mu_T} we obtain that
\begin{equation}\label{eq:easy_1}
	\Cs(\Omega)\ge T^2\Cs(B)|\Omega_T|^{\frac{n-2s}{n}}\geq T^2\Cs(B)\left(1+\gamma\mathcal A(\Omega)\right)^{\frac{n-2s}{n}}.
\end{equation}
By convexity, we know that
\begin{equation}\label{eq:easy_2}
	T^2\geq 1-2(1-T)\geq 1-2\kappa\mathcal{A}(\Omega)
\end{equation}
and that
\begin{equation}\label{eq:easy_3}
	(1+\gamma\A(\Omega))^{\frac{n-2s}{n}}\geq 1+\lambda \A(\Omega),
\end{equation}
with $\lambda$ as in \eqref{eq:def_kappa}. By the definition of $\lambda$ and $\kappa$, as in \eqref{eq:def_kappa}, we derive that
\begin{equation}\label{eq:easy_4}
	(1-2\kappa \A(\Omega))(1+\lambda\A(\Omega))\geq 1+\frac{\lambda}{2}\A(\Omega)\geq (1+C_5\A(\Omega)^{\frac{3}{s}}),
\end{equation}
with
\[
C_5:=2^{-\frac{3}{s}}\lambda
\]
Putting together \eqref{eq:easy_1} with \eqref{eq:easy_2}, \eqref{eq:easy_3} and \eqref{eq:easy_4}, we obtain \eqref{wanted} with $C=C_5$, thus concluding the proof.
}
%\begin{equation}
%\begin{split}
%\Cs(\Omega)&\ge T^2\Cs(\Omega_T)\ge T^2\Cs(B)|\Omega_T|^{\frac{n-2s}{n}}\\
%&\ge T^2\Cs(B)\left(1+\gamma\mathcal A(\Omega)\right)^{\frac{n-2s}{n}}\\
%&\ge \Cs(B)\left(1+\gamma\mathcal A(\Omega)\right)^{\frac{n-2s}{n}} +(T^2-1)\Cs(B)\left(1+\gamma\mathcal A(\Omega)\right)^{\frac{n-2s}{n}}\\
%& \ge \Cs(B)\left(1 +\gamma\frac{n-2s}{n}\mathcal A(\Omega)\right)(1- 2(1-T))\\
%&\ge \Cs(B)(1+ C\mathcal A(\Omega)),
%\end{split}
%\end{equation}
%where we have also used that $1-T \le \mathcal A_\Omega$. By rearranging the terms, this concludes the proof.

\end{proof}

{\begin{remark}\label{rmk:asympt_1}
	By carefully scanning the proof of Theorem \ref{main}, one can explicitly find the constant $C_{n,s}$ appearing in \eqref{quantitativa}, which amounts to
	\begin{equation*}
		C_{n,s}=\max\left\{ 2^{-\frac{3}{s}},\frac{\kappa^{\frac{2}{s}}C_3}{\alpha_{n,s}}\left(\frac{C_4}{c_{n,s}}\right)^{\frac{1}{s}-1}\frac{1}{(1-s)\Cs(B)^{\frac{1}{s}}}  \right\},
	\end{equation*}
	where $\kappa$ is as in \eqref{eq:def_kappa}, and $C_3,C_4$ depend only on $n$ (indeed their dependence on $\gamma$ stated in the proof of Theorem \ref{main} is actually pointless, being $\gamma$ universally fixed in $(0,1/9)$, see \eqref{eq:C_3} for the explicit value of $C_3$). The constants $\alpha_{n,s}$ and $c_{n,s}$ are as in \eqref{eq:alpha} and \eqref{eq:c}, respectively, and they are uniformly bounded away from $0$ and $+\infty$ as $s\to 1^-$, see \cite[Remark 2.7]{BCV}. Moreover, it can be easily checked that this fact holds true for the constant $\kappa$ as well, by its definition, when $n\geq 3$.
\end{remark}}

%\begin{remark}
%{\color{blue}The fact that the constant $C_{n,s}$ in Theorem \ref{main} is uniform as $s \uparrow 1$ can be seen exactly as in Remark 5.1 in \cite{BCV}, just replacing $\lambda_{s,q}(B)$ by $\Cs(B)$. The fact that
%$$\lim_{s\nearrow 1} (1-s)\,{\rm cap}_s(B)=\frac{\omega_n}{2}\,{\rm cap}(B),$$ follows by Lemma \ref{cap-lim} below.}
%\end{remark}

\section{\texorpdfstring{Asymptotics as $s\nearrow 1$}{Asymptotics as s converges to 1}}
In this section we prove Thereom \ref{main2}.
We  recall  the definition of the standard (Newtonian) capacity of a closed $\Omega\sub\R^n$ {for $n\geq3$}, which is equivalent to \eqref{eq:def_std_cap} when $\Omega$ is a compact set
\begin{equation}\label{eq:def_cap}
	\C(\Omega)=\inf\left\{\int_{\R^n} \abs{\nabla u}^2\,dx\colon u\in \mathcal{D}^{1,2}(\R^n)~\text{and }u-\eta_\Omega\in \mathcal{D}^{1,2}(\R^n\setminus \Omega) \right\},
\end{equation}
where $\eta_\Omega\in C_c^\infty(\R^n)$ is such that $\eta_\Omega=1$ in an open neighbourhood of $\Omega$ and, for any open $\mathcal{O}\sub\R^n$, the space $\mathcal{D}^{1,2}(\mathcal{O})$ is defined as the completion of $C_c^\infty(\mathcal{O})$ with respect to the norm
\[
	u\mapsto \left(\int_{\mathcal{O}}\abs{\nabla u}^2\,dx \right)^{1/2}.
\]
We also recall that, when $\mathcal{O}$ is bounded, then the space $\mathcal{D}^{1,2}(\mathcal{O})$ coincide with the usual Sobolev space $W_0^{1,2}(\mathcal{O})$, thanks to the validity of the Poincar\'e inequality.

%\begin{lemma}
%	Let $s\in (0,1)$ and $n>2s$. For any $R>0$ let $u_{s,B_R}\in\mathcal{D}^{s,2}(\R^n)$ denote the capacitary potential of $B_R\subseteq\R^n$. Then there exists $C=C_n>0$ such that
%	\[
%	\frac{1}{C}\frac{\sin(\pi s)}{1-s}\frac{1}{|x|^{n-2s}}\leq u_{s,B_R}(x)\leq C\frac{\sin(\pi s)}{1-s}\frac{1}{|x|^{n-2s}}, \quad\text{for }|x|>R.
%	\]
%	In particular, if $s\in(s_0,1)$, for some $s_0\in (0,1)$ then
%	\begin{equation}\label{eq:up_low_bound_2}
%		\frac{1}{\tilde{C}}\frac{1}{|x|^{n-2s}}\leq u_{s,B_R}(x)\leq \tilde{C}\frac{1}{|x|^{n-2s}}, \quad\text{for }|x|>R,
%	\end{equation}
%	for some $\tilde{C}=\tilde{C}_{n,s_0}>0$.
%\end{lemma}

%\begin{lemma}\label{cap-lim}
%	Let $n\ge 3$, then for every $\Omega\subset\mathbb{R}^n$ \red{compact} set, we have 
%	\begin{equation}\label{eq:asympt_s_th1}
%		\limsup_{s\nearrow 1} (1-s)\,{\rm cap}_s(\Omega)\le \frac{\omega_n}{2}\, {\rm cap}(\Omega),
%	\end{equation}
%	where $\omega_n:=|B_1|$. If in addition $\Omega$ {is the closure of an open bounded set with \color{red}Lipschitz boundary} then
%	\begin{equation}\label{eq:asympt_s_th2}
%		\lim_{s\nearrow 1} (1-s)\,{\rm cap}_s(\Omega)=\frac{\omega_n}{2}\,{\rm cap}(\Omega).
%	\end{equation}
%\end{lemma}

\begin{proof}[Proof of Proposition \ref{main2}]
	\textbf{Proof of \eqref{eq:asympt_s_th1}.}
	The first part of the statement follows easily by a celebrated result by Bourgain-Brezis-Mironescu stating  that
	\[
	\lim_{s\nearrow 1}(1-s)\,[\varphi]^2_{s}=\frac{\omega_n}{2}\,\int_{\R^n} |\nabla \varphi|^2\,dx,\qquad \mbox{ for every } \varphi\in C^\infty_c(\R^n).
	\]
	Indeed, by taking a function $\varphi\in C_c^\infty(\R^n)$ satisfying $\varphi\ge \chi_\Omega$, we deduce that 
	\[
	\limsup_{s\nearrow 1}(1-s)\,{\rm cap}_s(\Omega)\le \lim_{s\nearrow 1}(1-s)\,[\varphi]^2_{s}=\frac{\omega_n}{2}\,\int_{\R^n} |\nabla \varphi|^2\,dx.
	\]
	Finally, \eqref{eq:asympt_s_th1} follows by taking the infimum over all admissible $\varphi$.
	
	\noindent\textbf{Proof of \eqref{eq:asympt_s_th2}.} Let us fix $s_0\in(0,1)$ and let us denote by $u_{s,\Omega}$ the $s$-capacitary potential of the set $\Omega$.
	Since $\Omega$ is compact, there exists a ball $B_{R_0}$ which contains $\Omega$. Hence, we have that
	$$u_{s,\Omega} \le u_{s,B_{R_0}}\;\; \mbox{a.e. in }\R^n.$$
	This can be easily proved by taking the Kelvin transform of the above functions and applying the maximum principle as in \cite[Theorem 3.3.2]{BV2016}.	We can now take advantage of the following precise decay rate of $u_{s,B_{R_0}}$, established in \cite[Proposition 3.6]{BMS}:
	\begin{equation}\label{decay}
		u_{s,\Omega}(x) \le u_{s,B_{R_0}}(x)\le \frac{2{R_0^{n-2s}}}{|x|^{n-2s}},\quad \mbox{for } |x|>R_0.
	\end{equation}
	Let us define  an almost optimal function, given by a suitable truncation of $u_{s,\Omega}$.
	For any fixed $\varepsilon >0$, we set
	$$u_{s,\Omega}^{\varepsilon}:=\frac{(u_{s,\Omega}-\varepsilon)^+}{1-\varepsilon}.$$
	We claim that $u_{s,\Omega}^{\varepsilon}\in \Ds$ and $u_{s,\Omega}^{\varepsilon}-\eta_\Omega\in \DsO$, with $\eta_\Omega\in C_c^\infty(\R^n)$ being such that $\eta_\Omega=1$ in an open neighbourhood of $\Omega$. Indeed, since $u_{s,\Omega}-\eta_\Omega\in \DsO$, there exists a sequence $\{v_k\}_k\sub C_c^\infty(\R^n\setminus\Omega)$ such that $v_k\to u_{s,\Omega}-\eta_\Omega$ in $\Ds$ as $k\to\infty$. If we now let $u_k:=v_k+\eta_\Omega$ we have that $u_k\in C_c^\infty(\R^n)$ and $u_k=1$ in an open $\Omega_k\supseteq\Omega$. Now, if we consider the function
	\[
		u_k^\eps:=\frac{(u_k-\eps)^+}{1-\eps}
	\]
	we have that $u_k^\eps\in \Ds$ and $u_k^\eps-\eta_\Omega\in \DsO$. Moreover, $u_k^\eps\to u_{s,\Omega}^\varepsilon$ in $\Ds$ as $k\to\infty$, thus proving the claim. We also observe that the family $\{u_{s,\Omega}^{\varepsilon}\}_{s\in (s_0,1)}$ satisfies the following properties:
	\begin{enumerate}
		\item there exists $\bar R=\bar R(\varepsilon)>0$, depending only on $\varepsilon$, such that 
		$$\mbox{supp }u_{s,\Omega}^\varepsilon \subset B_{\bar R}\,\,\mbox{for any } s\in(s_0,1).$$
		This follows by the upper bound \eqref{decay}: we choose $\bar R>\left(\frac{2{R_0^{n-2s}}}{\varepsilon}\right)^{\frac{1}{n-2}}$ {with $R_0$ being such that $\Omega\sub B_{R_0}$}. In particular, this implies that $u_{s,\Omega}^\varepsilon\in \widetilde{W}_0^{s,2}(B_{\bar{R}})$, where, for any open $\mathcal{O}\sub\R^n$ we denote
		\begin{equation*}
			\widetilde{W}_0^{s,2}(\mathcal{O}):=\left\{ u\in L^1_{\textup{loc}}(\R^n)\colon [u]_s<\infty~\text{and }u=0~\text{in }\R^n\setminus\mathcal{O} \right\},
		\end{equation*}
		which, in case $\mathcal{O}$ is bounded and has Lipschitz boundary, coincides with the space $\mathcal{D}^{s,2}(\mathcal{O})$, see \cite[Proposition B.1]{BPS};
		\item there holds
		\begin{equation}\label{eq:u_s_eps}
			(1-s)[u_{s,\Omega}^{\varepsilon}]_s^2 \leq (1-s)\frac{[u_{s,\Omega}]_s^2}{(1-\varepsilon)^2}\le C_1,
		\end{equation}
		for any $\varepsilon>0$ and $s\in (s_0,1)$, with $C_1>0$ independent of $\varepsilon$ and $s$. This is a direct consequence of \eqref{eq:asympt_s_th1}.
	\end{enumerate}
	Hence we can apply \cite[Proposition 3.6]{BPS} to the family $\{u_{s,\Omega}^{\varepsilon}\}_{s\in(s_0,1)}$ to deduce that 
	there exists an increasing sequence $s_k \in (s_0, 1)$ converging to $1$ and a function $u_{\Omega}^\varepsilon\in W^{1,2}_0(B_{\bar R})$
	such that
	$$\lim_{k\rightarrow \infty} \|u_{s_k,\Omega}^{\varepsilon}-u_{\Omega}^\varepsilon\|_{L^2(B_{\bar R})}=0.$$
	Analogously, being $\Omega$ a Lipschitz domain, we know that $u_{s,\Omega}^{\varepsilon}-\eta_\Omega\in \widetilde{W}_0^{s,2}(B_{\bar{R}}\setminus\Omega)$ and that
	\[
		(1-s)[u_{s,\Omega}^{\varepsilon}-\eta_\Omega]_s^2\leq C_2
	\]
	for all $\varepsilon>0$ and $s\in (s_0,1)$, with $C_2>0$ independent of $\varepsilon$ and $s$. Therefore, we can apply \cite[Proposition 3.6]{BPS} to the family $\{u_{s,\Omega}^{\varepsilon}-\eta_\Omega\}_{s\in(s_0,1)}$ as well, and this entails the existence of a (not relabeled) subsequence $s_k\in(s_0,1)$ and of a function $v_{\Omega}^\eps\in W_0^{1,2}(B_{\bar{R}}\setminus\Omega)$ such that
	\[
		\lim_{k\rightarrow \infty} \|u_{s_k,\Omega}^{\varepsilon}-\eta_\Omega-v_{\Omega}^\varepsilon\|_{L^2(B_{\bar R})}=0,
	\]
	where the functions are trivially extended in $\Omega$. As a consequence, we obtain that
	\[
		u_\Omega^\eps-\eta_\Omega=v_\Omega^\eps\in W_0^{1,2}(B_{\bar{R}}\setminus \Omega),
	\]
	which, in turn, implies that the trivial extension of $u_\Omega^\eps$ to the whole $\R^n$ is an admissible competitor for ${\mathrm{cap}}(\Omega)$.	Hence we have
	\begin{align*}
		\frac{\omega_n}{2}{\rm{cap}}(\Omega) \leq \frac{\omega_n}{2}\int_{\R^n}|\nabla u^\varepsilon_{\Omega}|^2\,dx &\leq \liminf_{s\nearrow 1}(1-s)[u^\varepsilon_{s,\Omega}]_s^2\\
		&\leq \frac{1}{(1-\varepsilon)^2}\liminf_{s\nearrow 1}(1-s)[u_{s,\Omega}]_s^2,
	\end{align*}
	where in the second inequality we have used the $\Gamma$-convergence result by Brasco, Parini, Squassina (more precisely, Proposition 3.11 in \cite{BPS}) and in the last one \eqref{eq:u_s_eps}. Finally, we conclude by letting $\varepsilon \rightarrow 0$. 
\end{proof}

\begin{remark}\label{rmk:asympt_2}
	Thanks to Theorem \ref{main2} it is possible to explicitly compute the limit as $s\to 1^-$ of the constant $C_{n,s}$ as in Theorem \ref{main}, which coincides with the constant $C_n$ appearing in Corollary \ref{cor}. Indeed, from the definitions of $\alpha_{n,s}$ and $c_{n,s}$, given in \eqref{eq:alpha} and \eqref{eq:c} respectively, and the property of the Gamma function, it is easy to see that
	\[
		\lim_{s\to 1^-}\alpha_{n,s}=\lim_{s\to 1^-}c_{n,s}=\pi^{-\frac{n}{2}}\Gamma\left(\frac{n+2}{2}\right).
	\]
	Moreover, if $B$ denotes the unitary ball in $\R^n$, in view of Theorem \ref{main2} we have that
	\[
		\lim_{s\to 1^-}(1-s)\Cs(B)=\frac{\omega_n}{2}\C(B)=\frac{n(n-2)}{2}\omega_n^2,
	\]
	see e.g. \cite[Theorem 2.8, point $(i)$]{Maly1997} for the explicit value of $\C(B)$. Thanks to these facts, Remark \ref{rmk:asympt_1} and basic calculus, it is easy to see that
	\[
		C_n=\lim_{s\to 1^-}C_{n,s}=\max\left\{ 2^{-3}, \frac{\kappa_1^2 C_3\pi^{\frac{n}{2}}}{\Gamma\left(\frac{n+2}{2}\right)}\frac{2}{n(n-2)\omega_n^2} \right\},
	\]
	where
	\[
		\kappa_1:=\frac{\lambda_1}{4(1+2\lambda_1)},\quad\text{with}\quad\lambda_1:=\frac{n-2}{10n},
	\]
	and $C_3$ as in \eqref{eq:C_3}.
\end{remark}

\section*{Acknowledgments}
E. Cinti is partially supported by MINECO grant MTM2017-84214-C2-1-P, by Gruppo Nazionale per l’Analisi
Matematica, la Probabilità e le loro Applicazioni (GNAMPA) of the Istituto
Nazionale di Alta Matematica (INdAM),
and is part of the Catalan research group 2017 SGR 1392.  B. Ruffini acknowledges partial support from the ANR-18-CE40-0013 SHAPO financed by the French Agence Nationale de la Recherche (ANR).  R. Ognibene acknowledges support from the MIUR-PRIN project No. 2017TEXA3H. A special thank goes to L. Brasco for several fruitful discussions and suggestions. {Eventually, the authors  thank Italy national football team for providing a joyful atmosphere during the last stages of the drafting process of this work.}

\bibliographystyle{aomalpha}

\providecommand{\bysame}{\leavevmode\hbox to3em{\hrulefill}\thinspace}
\providecommand{\noopsort}[1]{}
\providecommand{\mr}[1]{\href{http://www.ams.org/mathscinet-getitem?mr=#1}{MR~#1}}
\providecommand{\zbl}[1]{\href{http://www.zentralblatt-math.org/zmath/en/search/?q=an:#1}{Zbl~#1}}
\providecommand{\jfm}[1]{\href{http://www.emis.de/cgi-bin/JFM-item?#1}{JFM~#1}}
\providecommand{\arxiv}[1]{\href{http://www.arxiv.org/abs/#1}{arXiv~#1}}
\providecommand{\doi}[1]{\url{https://doi.org/#1}}
\providecommand{\MR}{\relax\ifhmode\unskip\space\fi MR }
% \MRhref is called by the amsart/book/proc definition of \MR.
\providecommand{\MRhref}[2]{%
	\href{http://www.ams.org/mathscinet-getitem?mr=#1}{#2}
}
\providecommand{\href}[2]{#2}

%\bibliography{bibliociogru}

\end{document}